\theoremstyle{plain}
\newtheorem{theorem}{Theorem}[section]
\newtheorem{definition}[theorem]{Definition}
\newtheorem{lemma}[theorem]{Lemma}
\newtheorem{corollary}[theorem]{Corollary}
\newtheorem{proposition}[theorem]{Proposition}
\newtheorem{hyp}[theorem]{Assumption}
\theoremstyle{remark}
\newtheorem{remark}[theorem]{Remark}
\def\dis
\def\R{{\mathbf R}}
\def\T{{\mathbf T}}
\def\Z{{\mathbf Z}}
\def\Sch{{\mathcal S}}
\def\O{\mathcal O}
\def\F{\mathcal F}
\def\({\left(}
\def\){\right)}
\def\<{\left\langle}
\def\>{\right\rangle}
\def\le{\leqslant}
\def\ge{\geqslant}
\def\1{{\mathbf 1}}
\def\d{{\partial}}
\def\eps{\varepsilon}
\def\l{\lambda}
\def\om{\omega}
\def\si{{\sigma}}
\DeclareMathOperator{\RE}{Re}
\DeclareMathOperator{\IM}{Im}
\DeclareMathOperator{\DIV}{div}
\numberwithin{equation}{section}
\begin{document}

\title[On NLS with nonlinear damping]{On nonlinear Schr\"odinger type
  equations with nonlinear damping}
\author[P. Antonelli]{Paolo Antonelli}
\address{Centro di Ricerca Matematica Ennio De Giorgi\\
Piazza dei Cavalieri, 3\\
56100 Pisa, Italy}
\email{paolo.antonelli@sns.it}

\author[R. Carles]{R\'emi Carles}
\address{CNRS \& Univ. Montpellier~2\\Math\'ematiques
\\CC~051\\34095 Montpellier\\ France}
\email{Remi.Carles@math.cnrs.fr}

\author[C. Sparber]{Christof Sparber}
\address{Department of Mathematics, Statistics, and Computer Science\\
University of Illinois, Chicago\\
851 South Morgan Street
Chicago, Illinois 60607, USA}
\email{sparber@uic.edu}

\begin{abstract} We consider equations of nonlinear Schr\"odinger type
  augmented by  
nonlinear damping terms. We show that nonlinear damping prevents
finite time blow-up in several situations, which we describe.  We
also prove that the presence of a quadratic confinement in all spatial
directions drives the solution of our model  
 to zero for large time. In the case without external potential we prove 
that the solution may not go to zero for large time due to
(non-trivial) scattering.  
\end{abstract}

\thanks{RC was supported by the French ANR projects
  SchEq (ANR-12-JS01-0005-01) and BECASIM
  (ANR-12-MONU-0007-04). CS acknowledges support by the NSF through
  grant no.   DMS-1161580.} 
\maketitle

\section{Introduction}
\label{sec:intro}
We consider, for $a>0$ and $\lambda\in\R$, the following class of damped nonlinear 
Schr\"odinger type equations (NLS):
\begin{equation}
  \label{eq:nlsp}
\left\{
  \begin{aligned}
      i\d_t u + \frac{1}{2}\Delta u & =V(x) u + \l
  |u|^{2\si_1} u -i a |u|^{2\si_2}u,\quad t\ge 0,\ x\in \R^d,\\
\quad u_{\mid t=0}&=u_0\in \Sigma, 
  \end{aligned}
\right.
\end{equation}
where $ \Sigma$ denotes the energy space associated to the harmonic
oscillator, i.e. 
\begin{equation*}
  \Sigma =\left\{ f\in H^1(\R^d),\ x\mapsto |x|f(x)\in L^2(\R^d)\right\}, 
\end{equation*}
equipped with the following norm:
\begin{equation*}
\| u\|_{\Sigma} := \| u \|_{L^2} + \| \nabla u \|_{L^2} + \| x u \|_{L^2}.
\end{equation*}
In the following, we allow $u_0$ to be arbitrarily large within $\Sigma$, i.e. 
we shall not be concerned with solutions corresponding to small initial data. 
We make the following standard assumption on the nonlinearities:
 \begin{equation*}
      0<\si_1,\si_2<\frac{2}{(d-2)_+},
    \end{equation*}
 where $(d-2)_+$ denotes the positive part. Thus, if $d\le 2$, we
 impose no size restriction on $\si_1, \si_2>0$. 
For $d\ge 3$ the above assumption ensures that both nonlinearities are
$H^1$-subcritical. 
The external potential $V$ is supposed to be harmonic (or zero),
\begin{equation*}
  V(x)= \frac{1}{2} \sum_{j=1}^d \om_j^2{x_j^2},\quad \om_j\ge 0.
\end{equation*}
As we shall indicate below, all of our results can be generalized to the case 
of potentials $V(x)\ge 0$, growing at most quadratically at infinity.
In the case $a=0$, it is well known that \eqref{eq:nlsp} is a
Hamiltonian equation (its mass and energy are conserved); see
e.g. \cite{CazCourant,Sulem}. In fact, the Hamiltonian counterpart of our model, 
i.e. \eqref{eq:nlsp} with $a\in i \R$ (yielding a nonlinear
Schr\"odinger equation with combined power-law nonlinearities) 
has been studied in \cite{TVZ07}.
In the present case $a>0$, the last term in \eqref{eq:nlsp} is dissipative, which is the reason why we consider
non-negative times only. Indeed, the dissipative nature of \eqref{eq:nlsp} can easily be seen from 
the fact that the local conservation law for the particle density $\rho= |u|^2$ is augmented as follows:
\begin{equation}\label{eq:conlaw}
\partial_t \rho + \text{div} J = - 2a \rho^{\si_2+1},
\end{equation}
where, as usual $J=\IM(\bar u \nabla u)$ denotes the current density. For $a>0$ the right hand side 
describes a nonlinear damping mechanism for the density.

Equations of the form \eqref{eq:nlsp} arise as phenomenological models in different 
areas of Physics. For example, in nonlinear optics, equation \eqref{eq:nlsp} with $V=0$
models the propagation of a laser pulse within an optical fiber ($d=1$) under the influence of 
additional multi-photon absorption processes, see, e.g., \cite{Bi01, Fi01}. Another application arises from
quantum mechanics, where NLS type models arise in the 
description of Bose-Einstein condensates in harmonic traps (which are experimentally required to 
produce these condensates). In this context the nonlinear damping 
is a model for the reduction of the condensate wave function through higher order 
particle-interactions, cf. \cite{Ad2002, BJM04}. 

From a mathematically point of view, NLS type equation with 
nonlinear damping terms have been studied in \cite{FiKl12} as a possibility to continue 
the solution of NLS beyond the point of finite-time blow-up (see also \cite{PaSuSu05} for an earlier study in this direction). 
In \cite{AnSp10}, global existence for the particular case $\sigma_1 = 1$ and $1< \sigma_2  \le 2$ in dimensions $d\le 3$ has been studied. 
Notice that in $d=3$, this allows to take into account an $H^1$-critical damping term (i.e. a quintic nonlinearity with $\sigma_2 =2$). 
In \cite{Da-p} the particular case of a mass critical nonlinearity $\sigma_1= 2/d$ and $V=0$ has been studied. In there, global in-time 
existence of solutions is established if $\sigma_2\ge 2/d$ and it is
claimed that finite time blow-up in the log-log regime occurs if
$\sigma_2 < 2/d$. A more complete understanding of the possibility of finite time blow-up remains an open problem, however 
(numerical simulations can be found in \cite{PaSuSu05, FiKl12}).\\

In the following, we shall develop a more systematic study of NLS type equations with nonlinear damping, 
generalizing the results mentioned above in several aspects:
\begin{itemize} 
\item We extend the results of global well-posedness to the case of general (energy-subcritical) nonlinearities.
\item We prove that in the case without external potential, the solution is asymptotically close to the solution of the free equation for $t\to +\infty$, i.e. 
we establish scattering for positive times.
\item We show that in the case where a quadratic external confinement
  is present in all spatial directions,  
the $L^2$ norm of solution vanishes asymptotically with 
a certain (not necessarily sharp) rate. (This result corrects a mistake in the proof of Corollary 2.5 in \cite{AnSp10}).
\item We compare the results above to the ones which can be obtained
  for $x\in M$, a compact manifold without boundaries. 
\end{itemize}

\begin{theorem}\label{theo:cauchyglobal}
  Let $d\ge 1$, $a>0$, $\om_1,\dots,\omega_d\ge 0$, and
  $u_0\in\Sigma$. Then, the Cauchy problem \eqref{eq:nlsp} has a unique solution $u\in
C(\R_+;\Sigma)$ in either one of the following cases:
 \begin{enumerate} 
\item $\l\ge 0$ (defocusing nonlinearity) and $0<\si_1,\si_2<2/(d-2)_+$;
\item $\l<0$ (focusing nonlinearity) with, either
\begin{enumerate}
\item $0<\si_1<2/d$ and $0<\si_2<2/(d-2)_+$, or
\item $2/d\le \si_1<\si_2<2/(d-2)_+$, or
\item $\si_1=\si_2=2/d$, or 
\item $2/d<\si_1=\si_2=:\si<2/(d-2)_+$ and
  $a\ge\min\(\si,\sqrt\si\) |\l|$. 
  \end{enumerate}
\end{enumerate}
If in addition  $\om_j>0$ for all $j$, then $u$ vanishes asymptotically as $t\to
+\infty$:
\begin{equation*}
  \|u(t)\|_{L^2(\R^d)}=\O\(t^{-\frac{2}{(d+2)\si_2}}\)\quad \text{as
  }t\to +\infty.  
\end{equation*}
If, however, $\omega_j = 0$ for all $j$, $u$ may behave
like the free evolution of a non-trivial asymptotic state $u_+\in
L^2(\R^d)$: $\|u(t)-e^{i\frac{t}{2}\Delta}u_+\|_{H^1(\R^d)}\to 0$ as
$t\to +\infty$.  
\end{theorem}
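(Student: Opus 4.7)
The plan is to address the three parts in turn: global well-posedness in $\Sigma$, $L^2$-decay under full harmonic confinement, and existence of nontrivial scattering when $V\equiv 0$.

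For \emph{global existence}, local existence in $\Sigma$ follows from the standard Strichartz/fixed-point scheme, since both $\si_1$ and $\si_2$ are $H^1$-subcritical. Globalisation relies on two dynamical identities: mass dissipation
\[\frac{d}{dt}\|u(t)\|_{L^2}^2=-2a\|u(t)\|_{L^{2\si_2+2}}^{2\si_2+2},\]
and, writing $E(u)=\tfrac{1}{2}\|\nabla u\|_{L^2}^2+\int V|u|^2\,dx+\tfrac{\lambda}{\si_1+1}\|u\|_{L^{2\si_1+2}}^{2\si_1+2}$, the dissipated energy identity
\[\frac{d}{dt}E(u)=-a\!\int\!|u|^{2\si_2}|\nabla u|^2 dx-2a\si_2\!\int\!|u|^{2\si_2}|\nabla|u||^2 dx-2a\!\int\! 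V|u|^{2\si_2+2}dx-2a\lambda\!\int\!|u|^{2\si_1+2\si_2+2}dx.\]
In case (1) the right-hand side is nonpositive, giving an $H^1$ a priori bound; the $x$-moment is controlled by the confinement term in $E$ or, in its absence, by a standard variance identity. In (2a), mass subcriticality of $\si_1$ together with the decrease of $\|u\|_{L^2}$ lets Gagliardo--Nirenberg absorb the focusing term. In (2b), the interpolation $\|u\|_{L^{2\si_1+2}}^{2\si_1+2}\lesssim \|u\|_{L^{2\si_2+2}}^{2\si_2+2}+\|u\|_{L^2}^{2}$ reduces the focusing to the (stronger) damping. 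The critical cases (2c)--(2d), where $\si_1=\si_2=:\si$ and the nonlinearity becomes $(\lambda-ia)|u|^{2\si}u$, are the principal obstacle. For (2c) one combines the sharp Weinstein Gagliardo--Nirenberg inequality with the monotone decrease of mass: once the mass drops below the ground-state threshold, coercivity of $E$ takes over. For (2d) the positive term $-2a\lambda\int|u|^{4\si+2}$ in $dE/dt$ is handled through the Gagliardo--Nirenberg bound
\[\int|u|^{4\si+2}dx\lesssim\Bigl(\int|u|^{2\si}|\nabla|u||^2 dx\Bigr)^{d\si/(2(\si+1))}\Bigl(\int|u|^{2\si+2}dx\Bigr)^{(2+(4-d)\si)/(2(\si+1))},\]
matched against the two negative terms $-a\int|u|^{2\si}|\nabla u|^2$ and $-2a\si\int|u|^{2\si}|\nabla|u||^2$. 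Careful constant-tracking produces exactly the threshold $a\ge\min(\si,\sqrt\si)|\lambda|$, the two branches crossing at $\si=1$ depending on which negative term is used in the absorption; extracting these sharp constants is the technical heart of the proof.

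For the \emph{$L^2$-decay}, when $\omega_j>0$ for every $j$ we have $V(x)\gtrsim|x|^2$, and Part 1 yields a uniform bound on $\|xu(t)\|_{L^2}$. The weighted Gagliardo--Nirenberg-type estimate $\|u\|_{L^2}\lesssim \|xu\|_{L^2}^{\beta}\|u\|_{L^{2\si_2+2}}^{1-\beta}$ with $\beta=d\si_2/(\si_2(d+2)+2)$, obtained by scaling (equivalently, by splitting the integral over $\{|x|\le R\}$ and $\{|x|>R\}$ and optimising in $R$), then gives $\|u\|_{L^{2\si_2+2}}^{2\si_2+2}\gtrsim C\|u\|_{L^2}^{\si_2(d+2)+2}$ with $C$ depending only on the uniform moment bound. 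Inserting this into the mass dissipation identity produces a closed ODE inequality of the form $\tfrac{d}{dt}\|u\|_{L^2}^2\le -c\|u\|_{L^2}^{\si_2(d+2)+2}$, whose integration gives the claimed polynomial rate.

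For \emph{nontrivial scattering} when $V\equiv 0$, the plan is to construct a wave operator. Fix $u_+\in\Sigma\setminus\{0\}$ of small enough norm, and solve \eqref{eq:nlsp} backwards from $u(T)=e^{iT\Delta/2}u_+$ by a Strichartz-type contraction on $[T_0,+\infty)$; the dispersive decay $\|e^{it\Delta/2}u_+\|_{L^r}\lesssim t^{-d(1/2-1/r)}$ makes both $|u|^{2\si_1}u$ and $|u|^{2\si_2}u$ time-integrable in the relevant mixed norms once $T_0$ is large. Passing $T\to+\infty$ yields a solution $u$ of \eqref{eq:nlsp} with $\|u(t)-e^{it\Delta/2}u_+\|_{H^1}\to 0$, which by Part 1 extends globally for $t\ge 0$; the initial datum $u_0:=u(0)$ then realises the claimed nontrivial asymptotic behaviour.
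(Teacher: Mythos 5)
Your treatment of the decay rate and of the scattering statement is essentially the paper's own argument (the dual Nash/localization inequality $\|u\|_{L^2}\lesssim\|u\|_{L^{2\si_2+2}}^{\theta}\|xu\|_{L^2}^{1-\theta}$ fed into the mass-dissipation identity and an ODE comparison; wave operators via a fixed point on $[T,\infty)$ in Strichartz norms), and cases (1) and (2a) are unproblematic. The genuine gaps are in the focusing cases (2b), (2c) and (2d), where the energy argument you sketch does not close.

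In (2b) you differentiate the \emph{standard} energy, so the only dangerous term in $\frac{d}{dt}E$ is $+2a|\lambda|\int|u|^{2\si_1+2\si_2+2}$, and nothing on your right-hand side can absorb it: the interpolation you invoke concerns $\|u\|_{L^{2\si_1+2}}$, a term of $E$ itself rather than of $\frac{d}{dt}E$, and in any case it produces $\|u\|_{L^{2\si_2+2}}^{2\si_2+2}$, which mass dissipation controls only in $L^1_t$, not pointwise in time. The device used in the paper is to add $\kappa\int|u|^{2\si_2+2}$ to the energy with $0<\kappa<a/(\si_2^2+\si_2)$: its time derivative contributes the negative term $-2a\kappa(\si_2+1)\int|u|^{4\si_2+2}$ (after a polar-factorization identity to handle $\int|u|^{2\si_2}\IM(\bar u\Delta u)$), and since $2\si_1+2\si_2+2<4\si_2+2$ when $\si_1<\si_2$, Young's inequality absorbs the bad term up to a remainder $C\int|u|^{2\si_2+2}$, which is time-integrable by mass dissipation. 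In (2c) your ground-state-threshold argument fails outright: the mass is monotone decreasing, but nothing forces it below $\|Q\|_{L^2}^2$ before a putative blow-up time; it may well converge to a limit above the threshold. The correct observation is that mass dissipation gives $u\in L^{2+4/d}([0,T]\times\R^d)$ for every $T$, which is precisely the critical space-time norm whose finiteness rules out blow-up for the $L^2$-critical problem. In (2d), the threshold $a\ge\min(\si,\sqrt\si)|\lambda|$ cannot be reached by a Gagliardo--Nirenberg absorption of $\int|u|^{4\si+2}$: the constant there is dimension-dependent and the Young remainder would drag in powers of $\|u\|_{L^{2\si+2}}^{2\si+2}$ that are not time-integrable, so at best one gets a condition involving $d$ and $\|u_0\|_{L^2}$. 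The paper instead differentiates the \emph{linear} energy, in which no $L^{4\si+2}$ term appears at all, and compares $a\int|u|^{2\si}\RE(\bar u\Delta u)$ with $|\lambda|\int|u|^{2\si}\IM(u\Delta\bar u)$ pointwise via the polar factor $\phi$ and the identity $|\nabla u|^2=|\RE(\bar\phi\nabla u)|^2+|\IM(\bar\phi\nabla u)|^2$; two different groupings of the resulting quadratic form yield the two branches $a\ge\si|\lambda|$ and $a\ge\sqrt\si|\lambda|$.
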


The paper is organized as follows: In the next section we establish the 
local (in-time) well-posedness of our model in the energy space. In Section \ref{sec:global}, 
we extend this to global in-time well-posedness using a modified energy functional. 
The long time behavior and the possible extinction of solutions is studied in 
Section \ref{sec:asymptotic}. Finally, we briefly discuss the case 
of compact manifolds in the appendix.

\section{Basic properties of the Cauchy problem}
\label{sec:cauchy}


In this section we shall show that \eqref{eq:nlsp} is locally well-posed for any $u_0\in \Sigma$ and we also establish a blow-up alternative. 

\subsection{Local well-posedness}
\label{sec:local}

We denote by 
$U(t)= e^{-itH}$, the Schr\"odinger group generated by $\dis H =-\frac{1}{2}\Delta +V$. We first recall the
standard Strichartz estimates (see e.g. \cite{CazCourant}).
\begin{definition}\label{def:adm}
 A pair $(q,r)$ is admissible if $2\le r
  <\frac{2d}{d-2}$ ($2\le r\le\infty$ if $d=1$, $2\le r<
  \infty$ if $d=2$)  
  and 
$$\frac{2}{q}=\delta(q):= d\left( \frac{1}{2}-\frac{1}{r}\right).$$
\end{definition}

\begin{proposition}[Strichartz estimates]\label{prop:strichartz}
  Let $T>0$.  There exists
$\eta>0$ such that the following holds:\\ 
$(1)$ For any admissible pair $(q,r)$, there exists $C_{q}$  such that
$$
\|U(\cdot) \varphi\|_{L^{q}([0,\eta];L^{r})} \le C_q 
\|\varphi \|_{L^2},\quad \forall \varphi\in L^2(\R^d).
$$
$(2)$ 
For $s\in \R$, denote
\begin{equation*}
  D_s(F)(t,x) = \int_{s}^t U(t-\tau)F(\tau,x)d\tau. 
\end{equation*}
For all admissible pairs $(q_1,r_1)$ and~$
    (q_2,r_2)$,  there exists $C=C_{q_1,q_2}$ independent of $s\in \R$
    such that  
\begin{equation}\label{eq:strichnl}
      \left\lVert D_s(F)
      \right\rVert_{L^{q_1}([s,s+\delta];L^{r_1})}\le C \left\lVert
      F\right\rVert_{L^{q'_2}\([s,s+\delta];L^{r'_2}\)},
\end{equation}
for all $F\in L^{p'_2}(I;L^{q'_2})$ and $0\le \delta\le \eta$.\\
$(3)$ In the case without potential, $V=0$, the above results remain true
with $\eta=\infty$. 
\end{proposition}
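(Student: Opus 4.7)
The plan is to prove the proposition by the standard two-step recipe: (i) derive a short-time dispersive estimate for the propagator $U(t)=e^{-itH}$ via Mehler's formula, then (ii) feed it into the abstract $TT^{*}$ machinery of Keel--Tao (or Ginibre--Velo, since the endpoint $r=2d/(d-2)$ is excluded from the definition of an admissible pair). The only genuinely new ingredient, compared with the free case, is the restriction to time intervals of length at most $\eta$, which reflects the caustics of the harmonic oscillator at times of the form $t=\pi k/\omega_{j}$.

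First I would exploit the fact that $H$ splits into one-dimensional operators, $H=\sum_{j}H_{j}$ with $H_{j}=-\frac{1}{2}\partial_{j}^{2}+\frac{1}{2}\omega_{j}^{2}x_{j}^{2}$, so $U(t)=\bigotimes_{j}e^{-itH_{j}}$. Mehler's formula gives each factor an explicit Schwartz kernel, from which one reads off
\begin{equation*}
  \|e^{-itH_{j}}\|_{L^{1}\to L^{\infty}}\le C\left|\tfrac{\sin(\omega_{j}t)}{\omega_{j}}\right|^{-1/2},
\end{equation*}
with the usual degeneration to $|t|^{-1/2}$ when $\omega_{j}=0$. I would then choose $\eta$ strictly below the first caustic, say $\eta<\pi/(2\max_{j}\omega_{j})$ (no constraint if all $\omega_{j}=0$), so that $|\sin(\omega_{j}t)|\ge c\,\omega_{j}|t|$ on $|t|\le\eta$; tensoring over $j$ yields the short-time dispersive bound
\begin{equation*}
  \|U(t)\|_{L^{1}\to L^{\infty}}\le C|t|^{-d/2},\qquad 0<|t|\le\eta.
\end{equation*}
Interpolating this with the $L^{2}$-unitarity of $U(t)$ via Riesz--Thorin gives $\|U(t)\|_{L^{r'}\to L^{r}}\le C|t|^{-\delta(r)}$ on the same interval, for any $2\le r<2d/(d-2)_{+}$.

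Next I would plug this dispersive estimate into the $TT^{*}$ argument. For part~(1), writing $T\varphi=U(\cdot)\varphi$ and estimating $TT^{*}$ by the Hardy--Littlewood--Sobolev inequality gives the homogeneous bound on $[0,\eta]$ for any admissible $(q,r)$. For part~(2), the same inputs combined with the Christ--Kiselev lemma (or the direct Ginibre--Velo argument, which suffices away from the endpoint) produce the inhomogeneous estimate on $[0,\delta]$ for any pair of admissible $(q_{1},r_{1}),(q_{2},r_{2})$. To promote this to the $s$-independent form stated in the proposition, I would invoke the autonomy of $H$: since $U(t-\tau)$ depends only on $t-\tau$, the change of variable $\tilde{F}(\sigma):=F(s+\sigma)$ identifies $D_{s}(F)(s+\cdot)$ with $D_{0}(\tilde{F})$, reducing the $[s,s+\delta]$ estimate to the already established $[0,\delta]$ one with the same constant.

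Finally, part~(3) is essentially free: for $V=0$, Mehler's formula collapses to the free Schrödinger kernel $(2\pi it)^{-d/2}e^{i|x-y|^{2}/(2t)}$, and the dispersive estimate $\|e^{it\Delta/2}\|_{L^{1}\to L^{\infty}}\le C|t|^{-d/2}$ holds for all $t\ne 0$, so the Keel--Tao argument runs on all of $\R$ and one may take $\eta=\infty$. There is no serious obstacle here — this is a classical result that the paper tacitly references to \cite{CazCourant}; the only minor subtlety is the caustic restriction in the first step, which is exactly why $\eta$ must be finite in the confined case.
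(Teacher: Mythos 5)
Your argument is correct: the short-time dispersive bound from Mehler's formula (with $\eta$ chosen below the first caustic of the confined directions), followed by Riesz--Thorin interpolation, the $TT^{*}$/Hardy--Littlewood--Sobolev argument of Ginibre--Velo (sufficient here since the endpoint is excluded from the definition of admissibility), Christ--Kiselev for the non-diagonal inhomogeneous pairs, and time-translation to get $s$-independence, is exactly the standard proof. The paper itself gives no proof of this proposition --- it merely recalls the result with a reference to \cite{CazCourant} --- and your sketch is a faithful account of the argument that reference relies on, so there is nothing to correct.
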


\begin{proposition}[Local existence]\label{prop:Sigmaloc2}
  Let $\l,a\in \R$, $\om_1,\dots,\omega_d\ge 0$, and    $\si_j>0$
  with $\si_j<2/(d-2)$ if 
  $d\ge 3$.  For  all
  $u_0\in \Sigma$,  
  there exists  $T$  and a
  unique solution $u$ of \eqref{eq:nlsp}, such that
  \begin{equation*}
    u,\nabla u,xu \in C\([0,T];L^2(\R^d)\)\cap
  \bigcap_{j=1,2}L^{\frac{4\si_j+4}{d\si_j}}\([0,T];L^{2\si_j+2}(\R^d)\).
  \end{equation*}
\end{proposition}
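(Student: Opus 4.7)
The plan is a standard Kato-type fixed-point argument applied to the Duhamel formulation
\begin{equation*}
  \Phi(u)(t) := U(t)u_0 - i\int_0^t U(t-\tau)F(u(\tau))\,d\tau,\qquad F(u)=\lambda|u|^{2\sigma_1}u - ia|u|^{2\sigma_2}u.
\end{equation*}
For $j=1,2$, I take $(q_j,r_j):=\bigl(\tfrac{4\sigma_j+4}{d\sigma_j},\,2\sigma_j+2\bigr)$: the subcritical assumption $\sigma_j<2/(d-2)_+$ ensures that these are admissible pairs in the sense of Definition~\ref{def:adm}, while the Hölder identity $\bigl\||u|^{2\sigma_j}u\bigr\|_{L^{r_j'}}=\|u\|_{L^{r_j}}^{2\sigma_j+1}$ makes them adapted to the nonlinearity. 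Since $q_j'<q_j$, applying the inhomogeneous Strichartz estimate of Proposition~\ref{prop:strichartz} will leave a positive power of $T$ on the right-hand side.

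I would then seek a fixed point of $\Phi$ in the ball
\begin{equation*}
  X_T(R) := \bigl\{u:\ \|u\|_{Y_T}+\|\nabla u\|_{Y_T}+\|xu\|_{Y_T}\le R\bigr\},\quad \|u\|_{Y_T}:=\|u\|_{L^\infty_TL^2}+\sum_{j=1,2}\|u\|_{L^{q_j}_TL^{r_j}},
\end{equation*}
endowed with the weaker distance $d(u,v):=\|u-v\|_{Y_T}$, the reduced regularity circumventing the usual lack of Lipschitz continuity of $F$ at the derivative level. Combining Proposition~\ref{prop:strichartz} with the Hölder bound $\bigl\||u|^{2\sigma_j}u\bigr\|_{L^{q_j'}_TL^{r_j'}}\le T^{\theta_j}\|u\|_{L^{q_j}_TL^{r_j}}^{2\sigma_j+1}$ yields a self-mapping estimate of the form $\|\Phi(u)\|_{Y_T}\le C\|u_0\|_{L^2}+CT^{\theta_1}R^{2\sigma_1+1}+CT^{\theta_2}R^{2\sigma_2+1}$, which closes for $R\sim\|u_0\|_\Sigma$ and $T$ small (with $T\le\eta$).

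To propagate the $\Sigma$-regularity of $u_0$, I would commute $\nabla$ and multiplication by $x$ with the equation. On the nonlinearity, the pointwise bounds $|\nabla F(u)|\lesssim(|u|^{2\sigma_1}+|u|^{2\sigma_2})|\nabla u|$ and $|xF(u)|\le(|u|^{2\sigma_1}+|u|^{2\sigma_2})|xu|$ reduce the estimate on $\|\nabla u\|_{Y_T}$ and $\|xu\|_{Y_T}$ to the same Hölder--Strichartz computation as for $u$ itself. The principal obstacle, and the only non-routine ingredient, is that $\nabla$ and $x$ do \emph{not} commute with $U(t)$ when $V\not\equiv 0$: for $H=-\tfrac12\Delta+V$ one has $[\partial_j,H]=\omega_j^2 x_j$ and $[x_j,H]=-\partial_j$. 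To handle this I would rely on the intertwining relations $x_jU(t)=U(t)A_j(t)$ and $\partial_jU(t)=U(t)B_j(t)$, where the Heisenberg observables $A_j(t):=U(-t)x_jU(t)$ and $B_j(t):=U(-t)\partial_jU(t)$ are, via Mehler's formula, first-order in $(x,\partial)$ with trigonometric coefficients bounded on $[0,\eta]$. Substituting these identities into the differentiated Duhamel formula reduces the $\nabla u$ and $xu$ estimates to $L^2$-level Strichartz with forcing terms that are again controlled by the ball $X_T(R)$.

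Finally, the contraction property in the weak metric $d$ follows from $|F(u)-F(v)|\lesssim\sum_j(|u|^{2\sigma_j}+|v|^{2\sigma_j})|u-v|$ via the identical Strichartz--Hölder argument, picking up one more factor $T^{\theta_j}$. Choosing $T=T(\|u_0\|_\Sigma)$ small enough therefore makes $\Phi$ a contraction on $(X_T(R),d)$; the unique fixed point is the sought solution, and continuity $u\in C([0,T];\Sigma)$ is a routine bootstrap from the Strichartz bounds.
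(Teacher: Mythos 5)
Your overall strategy is the same as the paper's: Duhamel's formula, a Strichartz-based fixed point in a ball built on the norms of $u$, $\nabla u$ and $xu$, and contraction for a weaker Lebesgue metric. The one point where you genuinely diverge is the treatment of the non-commutation of $\nabla$ and $x$ with $U(t)$: you propose the intertwining through the Heisenberg observables $U(-t)x_jU(t)$, $U(-t)\d_jU(t)$, explicit by Mehler's formula for harmonic $V$, whereas the paper keeps the commutator contributions (the terms $\Phi(u)\nabla V$ and $\nabla\Phi(u)$) as source terms in Duhamel and absorbs them by the crude bound $CT\(\|\Phi(u)\|_{L^\infty_TL^2}+\|x\Phi(u)\|_{L^\infty_TL^2}\)$, which has the advantage of extending to any smooth at-most-quadratic potential. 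For the harmonic $V$ of the statement your route is legitimate.

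There is, however, a genuine error in your central nonlinear estimate. You claim $\bigl\||u|^{2\si_j}u\bigr\|_{L^{q_j'}_TL^{r_j'}}\le T^{\theta_j}\|u\|_{L^{q_j}_TL^{r_j}}^{2\si_j+1}$ with $\theta_j>0$, justified by ``$q_j'<q_j$''. The spatial H\"older step is fine since $r_j'(2\si_j+1)=r_j$, but the temporal one requires $\frac{2\si_j+1}{q_j}\le\frac{1}{q_j'}=1-\frac{1}{q_j}$, i.e. $q_j\ge 2\si_j+2$, and the resulting power of $T$ is $1-\frac{d\si_j}{2}$. With $q_j=\frac{4(\si_j+1)}{d\si_j}$ this power is positive only for $\si_j<2/d$, vanishes at $\si_j=2/d$, and for $\si_j>2/d$ the H\"older inequality goes the wrong way, so your self-mapping bound $\|\Phi(u)\|_{Y_T}\le C\|u_0\|_{L^2}+CT^{\theta_1}R^{2\si_1+1}+CT^{\theta_2}R^{2\si_2+1}$ is false on most of the admissible range $\si_j<2/(d-2)$. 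The repair is standard and your ball already contains what is needed: write $\frac{1}{q_j'}=\frac{1}{q_j}+\frac{2\si_j}{\tau_j}$ with the finite time exponent $\tau_j=\frac{2\si_j(2\si_j+2)}{2-(d-2)\si_j}$ and, when $\si_j\ge 2/d$, bound $\|u\|_{L^{\tau_j}_TL^{r_j}}^{2\si_j}\le T^{2\si_j/\tau_j}\|u\|_{L^\infty_TH^1}^{2\si_j}$ using the Sobolev embedding $H^1\hookrightarrow L^{2\si_j+2}$ (valid precisely because $\si_j<2/(d-2)$). This case distinction, which the paper makes explicitly, is what makes the contraction close in the mass-critical and supercritical regimes; without it your argument only proves the result for $\si_1,\si_2<2/d$.
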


\begin{proof}
We present the main steps of the classical
  argument, which can be found for instance in
  \cite{CazCourant,Ginibre} in the 
  case $V=0$ (see also \cite{Ca11} in the presence of a
  potential). Duhamel's formulation for \eqref{eq:nlsp} reads 
  \begin{equation}\label{eq:duham}
    u(t)=U(t)u_0 -i\l \int_0^t U(t-\tau)\(|u|^{2\si_1}u\)(\tau)d\tau-a
    \int_0^t U(t-\tau)\(|u|^{2\si_2}u\)(\tau)d\tau. 
  \end{equation}
Denote the right hand side by $\Phi(u)(t)$. 
Proposition~\ref{prop:Sigmaloc2} follows from a fixed point argument
in a ball of the space
\begin{equation*}
  X_T = \left\{u\in C([0,T];\Sigma)\ ;\ u,xu,\nabla u \in
    \bigcap_{j=1,2}L^{\frac{4\si_j+4}{d\si_j}}_TL^{2\si_j+2} \right\},
\end{equation*}
where $L^q_TL^r$ stands for $L^q([0,T];L^r(\R^d))$.
For $j=1,2$, introduce the Lebesgue exponents 
\begin{equation}\label{eq:exponents}
  r_j=2\si_j+2\quad ;\quad q_j=\frac{4\si_j+4}{d\si_j}\quad ;\quad
  \theta_j=\frac{2\si_j(2\si_j+2)}{2-(d-2)\si_j}. 
\end{equation}
Then $(q_j,r_j)$ is admissible, and
\begin{equation*}
  \frac{1}{r_j'}=\frac{2\si_j}{r_j}+\frac{1}{r_j}\quad ;\quad
  \frac{1}{q_j'}=\frac{2\si_j}{\theta_j} +\frac{1}{q_j}. 
\end{equation*}
Proposition~\ref{prop:strichartz} and H\"older inequality yield, for $j=1,2$
\begin{align*}
  \|\Phi(u)\|_{L^{q_j}_TL^{r_j}\cap L^\infty_T
    L^2}&\le C \|u_0\|_{L^2} 
  +C \sum_{\ell=1,2}\left\lVert
    \lvert u\rvert^{2\si_\ell}u\right\rVert_{L^{q_\ell'}_T L^{r_\ell '}}\\
&\le C \|u_0\|_{L^2}
  +C \sum_{\ell=1,2}\|u\|_{L^{\theta_\ell}_T L^{r_\ell}}^{2\si_\ell
  } \|u\|_{L^{q_\ell}_T L^{r_\ell}}, 
\end{align*}
where $C$ is independent of $T\le \eta$. We note that if $\si_\ell\le
2/d$ for $\ell=1,2$, then $\theta_\ell \le q_\ell$, and
\begin{equation}\label{eq:strich_L2}
  \|\Phi(u)\|_{L^{q_j}_T L^{r_j}\cap L^\infty_T
    L^2}\le C \|u_0\|_{L^2}
  +C \sum_{\ell=1,2}T^{2\si_\ell
    (1/\theta_\ell-1/q_\ell)}\|u\|_{L^{q_\ell}_TL^{r_\ell}}^{2\si_\ell 
 +1 }.
\end{equation}
If $\si_\ell>2/d$ for $\ell=1$ or $2$, using Sobolev
embedding, 
\begin{equation*}
 \|\Phi(u)\|_{L^{q_j}_T L^{r_j}\cap L^\infty_T
    L^2} \le C \|u_0\|_{L^2}
  +C \sum_{\ell=1,2}T^{2\si_\ell/\theta_\ell
  }\|u\|_{L^\infty_T H^1}^{2\si_\ell} \|u\|_{L^{q_\ell}_T L^{r_\ell}}. 
\end{equation*}
We have
\begin{align*}
  \nabla \Phi(u)(t) &= U(t)\nabla u_0 -i\l \int_0^t
  U(t-\tau)\nabla \(|u|^{2\si_1}u\)(\tau)d\tau \\
&-a \int_0^t
  U(t-\tau)\nabla \(|u|^{2\si_2}u\)(\tau)d\tau
-i\int_0^t U(t-\tau)\(\Phi(u)(\tau)\nabla V(\tau)\)d\tau. 
\end{align*}
We estimate the second term of the right hand side as above, and, for
all admissible pairs $(q,r)$, the 
new term is estimated by
\begin{align*}
\left\| \int_0^t U(t-\tau)\(\Phi(u)(\tau)\nabla V(\tau)\)
d\tau\right\|_{L^{q}_TL^{r}} &\le
C \|\Phi(u)\nabla V\|_{L^1_T L^2}\\
& \le C T \( \|\Phi(u)\|_{L^\infty_T L^2}+\|x
\Phi(u)\|_{L^\infty_T L^2}\),
\end{align*}
where we have written an estimate which is valid in the more general
case where $V$ is at most quadratic ($\d^\alpha V\in L^\infty(\R^d)$
for $|\alpha|\ge 2$). 
Similarly, to estimate $x \Phi(u)$, a new term appears, which is
controlled by 
$$C T\|\nabla \Phi(u)\|_{L^\infty_TL^2}.$$
\smallbreak

Choosing $T$ sufficiently small, one can then prove that $\Phi$ maps
a suitable ball in $X_T$ into itself. Contraction for the norm
$\|\cdot\|_{L^{q_1}(I_T;L^{r_1})}$ is proved similarly, and one concludes by
remarking that $X_T$ equipped with this norm 
is complete. 
\end{proof}

\begin{remark}
  The above result can be extended to the case where the first
  assumption is replaced with $u\in L^\infty(\R_+;\F(H^s))$
  for some $s>0$, up to changing the application of H\"older
  inequality in the proof. \end{remark}

\begin{remark}[Energy-critical damping]
 When $d\ge 3$, the case $\si_2=2/(d-2)$
 could be considered, like in \cite{AnSp10} for the case $d=3$. This
  requires a different presentation in the proofs, which is the
 reason why this case is not studied here. 
\end{remark}

\begin{remark}[More general potentials]
  As suggested in the course of the proof,
  Proposition~\ref{prop:Sigmaloc2} remains valid if we assume more
  generally that $V(x)$ is smooth, and at most quadratic, i.e. $\d^\alpha V\in
  L^\infty(\R^d)$ for all $|\alpha|\ge 2$.
\end{remark}

\subsection{Basic \emph{a priori} estimates and blow-up alternative}
\label{sec:apriori}

In order to extend the obtained local in-time solution to arbitrary time intervals, we shall derive several 
{\it a priori} estimates. In a first step, we recall \eqref{eq:conlaw} to infer the following.

\begin{lemma}[Mass dissipation] \label{lem:L2}
The local in time solution $u(t)\in \Sigma$ satisfies
\begin{equation}  \label{eq:L2}
 \frac{d}{dt}\| u(t)\|_{L^2}^2 +
 2a\|u(t)\|_{L^{2\si_2+2}}^{2\si_2+2}=0,\quad \forall t\in [0,T].  
\end{equation}
As a consequence, we have that $u\in L^\infty([0,T]\times L^2(\R^d)) \cap  L^{2\si_2+2}([0,T]\times \R^d)$.
\end{lemma}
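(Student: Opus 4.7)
The plan is to derive the identity \eqref{eq:L2} by a standard multiplier argument, and then read off both the $L^\infty_t L^2_x$ and the $L^{2\si_2+2}_{t,x}$ bounds from the resulting ODE.

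At the formal level, I multiply \eqref{eq:nlsp} by $\bar u$, integrate over $\R^d$, and integrate by parts to get
\begin{equation*}
i \int_{\R^d} \d_t u \,\bar u \,dx - \tfrac12 \int_{\R^d} |\nabla u|^2 \,dx = \int_{\R^d} V|u|^2 \,dx + \l \int_{\R^d} |u|^{2\si_1+2} \,dx - i a \int_{\R^d} |u|^{2\si_2+2} \,dx.
\end{equation*}
Taking imaginary parts kills the kinetic, potential, and $\lambda$-terms (all real), leaving
\begin{equation*}
\RE \int_{\R^d} \d_t u \,\bar u \,dx = \tfrac12 \tfrac{d}{dt}\|u(t)\|_{L^2}^2 = -a \|u(t)\|_{L^{2\si_2+2}}^{2\si_2+2},
\end{equation*}
which is exactly \eqref{eq:L2}.

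To justify this at the $\Sigma$-regularity produced by Proposition~\ref{prop:Sigmaloc2}, I would proceed by approximation: take $u_0^n \in \mathcal{H}^2 := \{f \in H^2 \,:\, |x|^2 f \in L^2\}$ with $u_0^n \to u_0$ in $\Sigma$. The same fixed point scheme gives a solution $u^n \in C([0,T_n];\mathcal{H}^2)$; continuous dependence (proved along with the fixed point in Proposition~\ref{prop:Sigmaloc2}) ensures that, up to shrinking $T$ uniformly in $n$, $u^n \to u$ in $X_T$. For the smoother solutions $u^n$, $\d_t u^n$ belongs to $C([0,T];L^2)$ by the equation (since $\Delta u^n$, $V u^n$, and the nonlinearities are in $L^2$), so the multiplication by $\bar u^n$ and the integration by parts are fully rigorous, giving \eqref{eq:L2} for $u^n$. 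The nonlinear term $\|u^n\|_{L^{2\si_2+2}}^{2\si_2+2}$ is controlled in $L^1([0,T])$ by the Strichartz norm in $X_T$, so passing to the limit in the integrated form yields \eqref{eq:L2} for $u$ (as an equality between absolutely continuous functions of $t$).

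The consequences are immediate: since $a > 0$, the right-hand side of \eqref{eq:L2} is nonpositive, so $\|u(t)\|_{L^2} \le \|u_0\|_{L^2}$ for all $t \in [0,T]$, giving $u \in L^\infty([0,T];L^2(\R^d))$. Integrating \eqref{eq:L2} over $[0,T]$ yields
\begin{equation*}
2a \int_0^T \|u(t)\|_{L^{2\si_2+2}}^{2\si_2+2} \,dt = \|u_0\|_{L^2}^2 - \|u(T)\|_{L^2}^2 \le \|u_0\|_{L^2}^2,
\end{equation*}
which gives $u \in L^{2\si_2+2}([0,T] \times \R^d)$. The only subtle point in the whole argument is the approximation step, and in particular verifying that the local existence time does not collapse as $n\to\infty$; this follows from the uniform continuity of the fixed point map with respect to data in $\Sigma$, which is built into the standard proof of Proposition~\ref{prop:Sigmaloc2}.
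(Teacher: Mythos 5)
Your proposal is correct and follows essentially the same argument as the paper: multiply the equation by $\bar u$, integrate over $\R^d$, take the appropriate real/imaginary part to kill the self-adjoint terms, and then read off the $L^\infty_tL^2_x$ bound from the sign of $a$ and the $L^{2\si_2+2}_{t,x}$ bound by integrating in time. The approximation step you spell out to justify the formal computation at $\Sigma$-regularity is left implicit in the paper, but is a standard and welcome addition.
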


\begin{proof} We multiply \eqref{eq:nlsp} by $\bar u$ and
integrate with respect to $x\in \R^d$. Taking the real part, yields \eqref{eq:L2}, which consequently implies
 \[ \frac{d}{dt} \| u (t)\|^2_{L^2} \le -2a\|u(t)\|_{L^{2\si_2+2}}^{2\si_2+2}\le 0,\] and thus $\| u(t)\|_{L^2} \le \| u_0 \|_{L^2}$, $\forall t \in [0,T]$. 
In addition, we can integrate \eqref{eq:L2} with respect to $t$ to infer 
\begin{equation*}
2 a \int_0^T \|u(t)\|_{L^{2\si_2+2}}^{2\si_2+2} \, dt =  \| u_0\|_{L^2}^2 - \| u(T)\|_{L^2}^2 \le \| u_0\|_{L^2}^2,
\end{equation*}
and thus $u \in L^{2\si_2+2}([0,T]\times \R^d)$. 
\end{proof}

\begin{remark}[Non-existence of steady states]
An immediate consequence of \eqref{eq:L2} is the non-existence of
non-trivial steady states.  
In the Hamiltonian case ($a=0$), they are found by inserting the ansatz $u(t,x) = \psi(x)e^{i\mu t}$ with $\mu \in \R$ into \eqref{eq:nlsp}
and study the resulting elliptic equation for $\psi$. In our case, \eqref{eq:L2} together with the fact that for stationary states 
$|u(t,x)|^2 = |\psi(x)|^2$, immediately implies that $\psi=0$.
\end{remark}

Proposition \ref{prop:Sigmaloc2} and Lemma \ref{lem:L2} allow us to infer the following blow-up alternative.

\begin{corollary}[Blow-up alternative]\label{cor:alternative}
Let $\l\in \R$, $a,\om_1,\dots,\omega_d\ge 0$,    $\si_j>0$
  with $\si_j<2/(d-2)$ if 
  $d\ge 3$,   and $u_0\in \Sigma$. Either the solution to 
  \eqref{eq:nlsp} exists for all $t\ge 0$, i.e. 
  \begin{equation}\label{eq:global}
    u,\nabla u,xu \in C\(\R_+;L^2(\R^d)\)\cap\bigcap_{j=1,2}
    L^{\frac{4\si_j+4}{d\si_j}}_{\rm 
      loc}\(\R_+;L^{2\si_j+2}(\R^d)\), 
  \end{equation}
or there exists $T>0$, such that 
\begin{equation*}
  \|\nabla u(t)\|_{L^2}
\mathop{\longrightarrow}\limits_{t{\mathop{\rightarrow}\limits_<}  T} 
+\infty.  
\end{equation*}
In the case $\si_1=\si_2=\frac{2}{d}$, or in the fully
mass-subcritical case $\si_1,\si_2<2/d$, the solution is global, that
is, \eqref{eq:global} is satisfied. 
\end{corollary}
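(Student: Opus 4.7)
The argument has two components: establishing the blow-up alternative, then ruling out finite-time blow-up in the mass-(sub)critical cases. For the alternative I would argue by contradiction. Let $T^* \in (0, \infty]$ be the maximal time of existence given by Proposition~\ref{prop:Sigmaloc2}. Assume $T^* < \infty$ while $M := \sup_{[0, T^*)}\|\nabla u(t)\|_{L^2} < \infty$; the goal is to show $\|u(t)\|_\Sigma$ stays bounded on $[0, T^*)$, so that reapplying Proposition~\ref{prop:Sigmaloc2} at some $t_0$ close enough to $T^*$ contradicts maximality. Lemma~\ref{lem:L2} already delivers $\|u(t)\|_{L^2} \le \|u_0\|_{L^2}$. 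For the weight, I would multiply \eqref{eq:nlsp} by $|x|^2\bar u$, take real parts and integrate by parts to derive the virial-type identity
\[
\frac{d}{dt}\|xu(t)\|_{L^2}^2 = 2\IM\int \bar u\, x\cdot\nabla u\,dx - 2a\int |x|^2 |u|^{2\si_2+2}\,dx.
\]
Since the damping integral is non-positive and Cauchy--Schwarz bounds the first term by $2\|xu\|_{L^2}\|\nabla u\|_{L^2} \le 2M\|xu\|_{L^2}$, the function $\|xu(\cdot)\|_{L^2}$ grows at most linearly on $[0, T^*)$. Together with the $L^2$-bound, this yields $\sup_{[0, T^*)}\|u\|_\Sigma < \infty$, closing the alternative.

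For the mass-subcritical case $\si_1, \si_2 < 2/d$, I would revisit the proof of Proposition~\ref{prop:Sigmaloc2}. Since $\theta_\ell < q_\ell$ for $\ell=1,2$, estimate \eqref{eq:strich_L2} closes a fixed-point argument in a ball whose radius and contraction time depend only on $\|u_0\|_{L^2}$. The mass dissipation of Lemma~\ref{lem:L2} makes $\|u(t)\|_{L^2}$ non-increasing, so iterating this construction on equal-length intervals produces a global $L^2$-Strichartz solution. Running Strichartz estimates for $\nabla u$ and $xu$---coupled through the harmonic source $u\nabla V$, which is controlled in $L^2$ by $C\|xu\|_{L^2}$---together with the surplus $T^{2\si_\ell(1/\theta_\ell - 1/q_\ell)} > 0$ coming from subcriticality, then propagates the $\Sigma$-regularity on every finite time interval by a Gronwall bootstrap. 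In particular $\|\nabla u(t)\|_{L^2}$ cannot blow up in finite time, and the first part yields \eqref{eq:global}.

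For the mass-critical case $\si_1 = \si_2 = 2/d$, I would integrate \eqref{eq:L2} on $[0, T^*)$ to obtain the global a priori bound
\[
2a\int_0^{T^*}\|u(t)\|_{L^{2+4/d}}^{2+4/d}\,dt \le \|u_0\|_{L^2}^2,
\]
noting that $(q, r) = (2+4/d, 2+4/d)$ is an admissible Strichartz pair. My plan is then to partition $[0, T^*)$ into finitely many sub-intervals on each of which this $L^q L^r$ norm lies below a fixed smallness threshold, and to bootstrap the inhomogeneous Strichartz estimate of Proposition~\ref{prop:strichartz} simultaneously for $u$, $\nabla u$ and $xu$, absorbing the source $u\nabla V$ by a factor $\eta$ as in the proof of Proposition~\ref{prop:Sigmaloc2}. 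I expect this to be the chief obstacle: being scaling-critical, the step tolerates no loss in Strichartz exponents, and one must close the coupled system $(\nabla u, xu)$ forced by the harmonic confinement. Once $\|\nabla u(t)\|_{L^2}$ is seen to stay bounded on $[0, T^*)$, the blow-up alternative rules out $T^* < \infty$.
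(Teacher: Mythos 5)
Your proposal is correct and follows essentially the same route as the paper: the virial-type identity plus Gronwall to show that boundedness of $\|\nabla u(t)\|_{L^2}$ controls $\|xu(t)\|_{L^2}$, the Tsutsumi iteration (local time depending only on the non-increasing $L^2$ norm) in the mass-subcritical case, and the global space-time $L^{2+4/d}$ bound from the mass-dissipation identity combined with the standard $L^2$-critical continuation criterion when $\si_1=\si_2=2/d$. The only difference is that you spell out the mechanisms behind the two continuation criteria that the paper simply cites.
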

\begin{proof}
Let $M>0$. 
  Lemma~\ref{lem:L2} shows that 
  $t\mapsto\|u(t)\|_{L^2}^2$ is non-increasing function. Thus, the only obstruction to
  well-posedness on  $[0,M]$ is the existence of a
  time $0<T<M$ such that 
\begin{equation*}
  \|x u(t)\|_{L^2}+\|\nabla u(t)\|_{L^2}
\mathop{\longrightarrow}\limits_{t{\mathop{\rightarrow}\limits_<}  T} 
 +\infty.  
\end{equation*}
As long as $u\in
C([0,t];\Sigma)$, we have 
\begin{align*}
   \frac{d}{dt}\int_{\R^d}x_j^2 \lvert u(t,x)\rvert^2dx &= 2\RE \int
   x_j^2 \overline u(t,x) \d_t u (t,x)dx = 2\IM \int
   x_j^2 \overline u(t,x) i\d_t u(t,x)dx \\
& = -\IM \int
   x_j^2 \overline u(t,x) \Delta u(t,x) - a\int x_j^2
   |u(t,x)|^{2\si_2+2}dx\\
& = 2\IM \int
   x_j \overline u(t,x) \d_j u(t,x) - a\int x_j^2
   |u(t,x)|^{2\si_2+2}dx\\
&\le 2 \|x u(t)\|_{L^2}\|\nabla u(t)\|_{L^2},
\end{align*}
where we have used Cauchy--Schwarz inequality and the assumption $a\ge
0$. 
Suppose  $u\in L^\infty ([0,T];H^1)$. Then the above estimate and
Gronwall's lemma show that $x u\in L^\infty 
([0,T];L^2)$,
hence a contradiction. Hence the first part of the corollary. The
second part follows from the  standard criterion for $L^2$-critical
problems (see e.g. \cite{CazCourant}): if the solution does not satisfy
\eqref{eq:global}, then there exists $T>0$ such that
\begin{equation*}
  \int_0^T \|u(t)\|_{L^{2+4/d}}^{2+4/d}dt=\infty.
\end{equation*}
This is also a direct consequence of the proof of
Proposition~\ref{prop:Sigmaloc2} (if $\si_1=\si_2=2/d$, then
$\theta_j=q_j$). Lemma~\ref{lem:L2} rules out this possibility; 
this point has already been noticed in \cite{Da-p}. 

Finally, if $\si_1,\si_2<2/d$, the standard
argument given initially in \cite{TsutsumiL2}, following again from
Proposition~\ref{prop:Sigmaloc2}, shows that \eqref{eq:global} follows
if $u\in L^\infty(\R_+;L^2(\R^d))$, which in turn is a direct
consequence of Lemma~\ref{lem:L2}. 
\end{proof}

\section{Global well-posedness}
\label{sec:global}

The {\it a-priori} bounds obtained in Section \ref{sec:apriori} are
not sufficient to infer global well-posedness for $\si_j\ge 2/d$
(unless $\si_1=\si_2=2/d$). 
In order to obtain further {\it a-priori} estimates, one possible approach would be to follow \cite{Da-p}, where 
the author studies the time-evolution of $\| \nabla u(t) \|_{L^2}$ and shows that for $\si_2>\si_1=2/d$ one 
can obtain a bound of the form
\[
\| \nabla u(t) \|_{L^2} \le \| \nabla u_0 \|_{L^2} \, e^{C t},
\]
for some $C>0$ depending on the involved parameters $a, \l, \sigma_2$. Indeed, an analogous result could also be obtained in our, more general, situation.
However, we shall rather follow the approach of \cite{AnSp10} based on a modified energy functional $E(t)$ which will allow us to infer (under certain conditions) 
uniform in-time bounds on different quantities involving $u(t)$.

\subsection{Bounds on a modified energy functional}

In the following, we denote 
\begin{equation}\label{eq:energy}
\begin{split}
E(t)= & \ \frac{1}{2} \| \nabla u(t)\|_{L^2}^2 + \int_{\R^d} V(x) |u(t,x)|^2 \, dx\\ 
& \ + \frac{\lambda}{\sigma_1+1}  \int_{\R^d}  |u(t,x)|^{2\sigma_1+2} dx + \kappa \int_{\R^d} |u(t,x)|^{2\sigma_2 +2} \, dx,
\end{split}
\end{equation}
for some $\kappa>0$ (to be made precise below). Clearly, $E$ is
well defined on $[0,T]$ for any $u\in C([0,T];\Sigma)$, by Sobolev embedding.  
Even though this energy functional is not conserved, we shall prove
that it is uniformly bounded in time, provided that some assumptions on the
involved parameters hold true. 

\begin{proposition} [Energy bound] \label{prop:enbound}
Let $0<\kappa<\frac{a}{\si_2^2+\si_2}$ and assume that
\begin{enumerate}
\item Either $\lambda \ge 0$, 
\item Or $\l <0$ and $\sigma_2>\si_1$.
\end{enumerate}
Then there exists a $C=C(\|u_0\|_{L^2})\ge 0$ such that:
\begin{equation*}
E(t) \le E(0) +  C(\|u_0\|_{L^2}),\quad \forall t\in [0,T],  
\end{equation*}
where $T>0$ is the existence time obtained in Proposition \ref{prop:Sigmaloc2}.
\end{proposition}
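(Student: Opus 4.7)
The plan is to differentiate $E(t)$ along the flow, organize the resulting expression into dissipative "good" terms and potentially dangerous "bad" terms, and show that a careful choice of $\kappa$ together with the conservative estimate on $\|u(t)\|_{L^2}$ from Lemma~\ref{lem:L2} is enough to ensure that $dE/dt$ stays under control. First I would rewrite \eqref{eq:nlsp} as $\partial_t u = -iBu - a|u|^{2\sigma_2}u$ with $Bu := -\tfrac12 \Delta u + Vu + \l|u|^{2\sigma_1}u$, multiply by $\overline{\partial_t u}$, take real parts, and integrate; this gives in the usual way $\frac{d}{dt}H(t) = -2a \,\IM \int |u|^{2\sigma_2} u\, \overline{\partial_t u}\, dx$, where $H$ denotes the Hamiltonian part of $E$ (all terms except $\kappa \int |u|^{2\sigma_2+2}$). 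For the extra damping-weight term, I would compute $\frac{d}{dt}\int |u|^{2\sigma_2+2} = (2\sigma_2+2)\,\RE \int |u|^{2\sigma_2} u\, \overline{\partial_t u}\, dx$.

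Next I would eliminate $\partial_t u$ by substituting the equation back into $\int |u|^{2\sigma_2}u\,\overline{\partial_t u}\,dx$. Splitting into real and imaginary parts and integrating by parts, one finds
\begin{equation*}
\RE \int |u|^{2\sigma_2} u\, \overline{Bu}\, dx
= \tfrac12 \int|u|^{2\sigma_2}|\nabla u|^2 \, dx + \sigma_2 \int |u|^{2\sigma_2}|\nabla |u||^2 \, dx + \int V |u|^{2\sigma_2+2}\, dx + \l \int |u|^{2\sigma_1+2\sigma_2+2}\, dx,
\end{equation*}
while the only term surviving in $\IM\int|u|^{2\sigma_2}u\,\overline{Bu}\,dx$ is $-\tfrac12 \IM\int|u|^{2\sigma_2}u\Delta\overline u\,dx$. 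Gathering everything and using $\overline{\partial_t u} = i\overline{Bu} - a|u|^{2\sigma_2}\overline u$ produces the identity
\begin{align*}
\tfrac{d}{dt}E(t) &= -a\int|u|^{2\sigma_2}|\nabla u|^2 - 2a\sigma_2\int|u|^{2\sigma_2}|\nabla|u||^2 - 2a\int V|u|^{2\sigma_2+2} \\
& \quad - 2a\l\int|u|^{2\sigma_1+2\sigma_2+2} - 2\kappa a(\sigma_2+1)\int|u|^{4\sigma_2+2} + \kappa(\sigma_2+1)\,\IM\int|u|^{2\sigma_2}u\Delta\overline u.
\end{align*}

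The main obstacle is the last, sign-indefinite term. I would estimate it through an integration by parts and Cauchy--Schwarz applied to $\RE(\overline u\nabla u)$ and $\IM(\overline u\nabla u)$, exploiting $|\RE(\overline u\nabla u)|^2+|\IM(\overline u\nabla u)|^2=|u|^2|\nabla u|^2$, to obtain the sharp bound
\begin{equation*}
\left|\IM\int|u|^{2\sigma_2}u\Delta\overline u\,dx\right| \le \sigma_2 \int|u|^{2\sigma_2}|\nabla u|^2\,dx.
\end{equation*}
The contribution of this term is therefore dominated by $\kappa\sigma_2(\sigma_2+1)\int|u|^{2\sigma_2}|\nabla u|^2$, which the assumption $\kappa<\frac{a}{\sigma_2^2+\sigma_2}$ ensures is strictly less than $a\int|u|^{2\sigma_2}|\nabla u|^2$; this leaves a residual dissipative term $-c_1\int|u|^{2\sigma_2}|\nabla u|^2$ with $c_1>0$. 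This is the critical step and the origin of the threshold for $\kappa$.

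To finish, in case (1), $\l\ge 0$, every remaining term on the right-hand side is nonpositive, so $E(t)\le E(0)$. In case (2), $\l<0$ with $\sigma_2>\sigma_1$, I would treat the only growing term $-2a\l\int|u|^{2\sigma_1+2\sigma_2+2}$ by the interpolation
\begin{equation*}
\int|u|^{2\sigma_1+2\sigma_2+2}\,dx \le \|u\|_{L^{4\sigma_2+2}}^{(4\sigma_2+2)\alpha}\|u\|_{L^2}^{2(1-\alpha)}, \qquad \alpha=\frac{\sigma_1+\sigma_2}{2\sigma_2}\in(0,1),
\end{equation*}
which is available precisely because $\sigma_1<\sigma_2$. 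Since $\|u(t)\|_{L^2}\le\|u_0\|_{L^2}$ by Lemma~\ref{lem:L2}, a scaled Young inequality absorbs a small multiple of $\int|u|^{4\sigma_2+2}$ into the dissipative term $-2\kappa a(\sigma_2+1)\int|u|^{4\sigma_2+2}$ at the cost of a constant depending only on $\|u_0\|_{L^2}$. Thus $\frac{d}{dt}E(t)\le C(\|u_0\|_{L^2})$ on $[0,T]$, and integrating in time yields the stated bound.
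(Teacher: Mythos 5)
Your computation of $\frac{d}{dt}E(t)$ and your treatment of the sign-indefinite term $\kappa(\sigma_2+1)\,\IM\int|u|^{2\sigma_2}u\,\Delta\bar u$ are correct and essentially identical to the paper's: the pointwise Cauchy--Schwarz bound $\left|\IM\int|u|^{2\sigma_2}u\,\Delta\bar u\,dx\right|\le\sigma_2\int|u|^{2\sigma_2}|\nabla u|^2\,dx$ is exactly the content of the paper's polar-factorization computation (the paper retains the nonnegative square $|\RE(\bar\phi\nabla u)-\IM(\bar\phi\nabla u)|^2$ explicitly, which is precisely the slack in your Young step), and the threshold $\kappa<a/(\sigma_2^2+\sigma_2)$ arises for the same reason. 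Case (1) is therefore complete.

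The gap is in case (2). You interpolate $L^{2\sigma_1+2\sigma_2+2}$ between $L^2$ and $L^{4\sigma_2+2}$; after Young's inequality the non-absorbed remainder is a \emph{constant in time}, $C(\|u_0\|_{L^2})$. Integrating $\frac{d}{dt}E(t)\le C(\|u_0\|_{L^2})$ yields only $E(t)\le E(0)+C(\|u_0\|_{L^2})\,t$, a bound growing linearly in $t$; the resulting constant is $C\cdot T$, and since $T$ from Proposition~\ref{prop:Sigmaloc2} depends on $\|u_0\|_{\Sigma}$, this is not a bound of the form $E(0)+C(\|u_0\|_{L^2})$. The uniformity in $T$ is not cosmetic: the proposition is used to obtain $u\in L^\infty(\R_+;H^1)$ in Theorem~\ref{theo:cauchy}, which a linearly growing bound does not give. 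The repair is to interpolate between $L^{2\sigma_2+2}$ and $L^{4\sigma_2+2}$ instead (possible since $2\sigma_2+2<2\sigma_1+2\sigma_2+2<4\sigma_2+2$ exactly when $0<\sigma_1<\sigma_2$), as the paper does: then the leftover term after absorption is a multiple of $\|u(t)\|_{L^{2\sigma_2+2}}^{2\sigma_2+2}$, whose integral over $[0,T]$ is bounded by $\|u_0\|_{L^2}^2/(2a)$ \emph{independently of $T$} thanks to the mass dissipation identity of Lemma~\ref{lem:L2}. The merely bounded $L^2$ norm carries no such time-integrability, so your choice of lower interpolation endpoint cannot be salvaged without this change.
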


\begin{proof}
We first assume that $u(t)$ is sufficiently regular an decaying so that all of the following formal manipulations can be carried out. 
Once the final result is established, a standard density argument allows to conclude that it also holds for $u\in C([0,T];\Sigma)$.

We compute the time-derivative of the energy functional \eqref{eq:energy}, using equation \eqref{eq:nlsp}, which yields:
\begin{align}\label{eq:time_der_en}
\frac{d}{dt} E(t) = &\ a  \int_{\R^d} |u|^{2\si_2}\RE(  u \Delta \bar u) \, dx - \kappa (\si_2 + 1) \int_{\R^d} |u|^{2\si_2} \IM (\bar u \Delta u) \, dx \\
\notag& \, - 2a \int_{\R^d} V(x) |u|^{2\si_2 +2} \, dx - 2a \lambda \int_{\R^d} |u|^{2\si_2 +2\si_1 +2} \, dx \\
\notag& \, - 2a \kappa (\si_2+1)  \int_{\R^d} |u|^{4\si_2 +2} \, dx.
\end{align}
Consider the first term on the right hand side; since
$\Delta|u|^2=2\RE(\bar u\Delta u)+2|\nabla u|^2$, notice it can be
rewritten, using integration by parts, 
\begin{equation*}
\int_{\R^d}|u|^{2\sigma_2}\RE(\bar u\Delta u)dx=-\int_{\R^d}|u|^{2\sigma_2}|\nabla u|^2dx
-2\sigma_2\int_{\R^d}|u|^{2\sigma_2}\big|\nabla|u|\big|^2dx.
\end{equation*}
Now, we rewrite the second term on the right hand side:
\begin{equation*}
\int_{\R^d}|u|^{2\sigma_2}\IM(\bar u\Delta u)dx=
\int_{\R^d}|u|^{2\sigma_2}\DIV\big(\IM(\bar u\nabla u)\big)dx=
-\int_{\R^d}\nabla|u|^{2\sigma_2}\cdot\IM(\bar u\nabla u)dx.
\end{equation*}
Here we use the polar factorisation introduced in \cite{AM} (see also \cite{CDS}), to show the above integral equals
\begin{equation}\label{eq:second_term}
-2\sigma_2\int_{\R^d}|u|^{2\sigma_2}\RE(\bar\phi\nabla u)\cdot\IM(\bar\phi\nabla u)dx,
\end{equation}
where $\phi$ is the polar factor related to $u$, 
\begin{equation*}
  \phi(t,x):=
\left\{
  \begin{aligned}
   & |u(t,x)|^{-1}u(t,x) & \quad\text{ if }u(t,x)\not =0,\\
&0& \quad\text{ if }u(t,x) =0.
  \end{aligned}
\right.
\end{equation*}
This indeed can first be proved by replacing $\phi$ with 
\begin{equation*}
  \phi^\eps(t,x) = \frac{u(t,x)}{\sqrt{|u(t,x)|^2+\eps^2}},
\end{equation*}
and then passing to the limit $\eps \to 0$ in $H^1$, as in \cite{AM},
\cite{CDS}. Let us rewrite \eqref{eq:second_term} as 
\begin{multline*}
\sigma_2\int_{\R^d} |u|^{2\sigma_2}\big|\RE(\bar\phi\nabla u)-\IM(\bar\phi\nabla u)\big|^2
-|u|^{2\sigma_2}\bigg(|\RE(\bar\phi\nabla u)|^2+|\IM(\bar\phi\nabla u)|^2\bigg)dx\\
=\sigma_2\int_{\R^d}|u|^{2\sigma_2}|\RE(\bar\phi\nabla u)-\IM(\bar\phi\nabla u)|^2dx
-\sigma_2\int_{\R^d}|u|^{2\sigma_2}|\nabla u|^2dx.
\end{multline*}
The last equality follows from the identity
\begin{equation*}
|\nabla u|^2=|\RE(\bar\phi\nabla u)|^2+|\IM(\bar\phi\nabla u)|^2,\qquad\textrm{a.e. in}\;\R^d,
\end{equation*}
see formulas (30) in \cite{AM} and (5.15) in \cite{CDS}.
Hence, by resuming the second term on the right hand side of \eqref{eq:time_der_en} is equal to
\begin{equation*}
-\kappa(\sigma_2^2+\sigma_2)\int_{\R^d}|u|^{2\sigma_2}|\RE(\bar\phi\nabla u)-\IM(\bar\phi\nabla u)|^2dx
+\kappa(\sigma_2^2+\sigma_2)\int_{\R^d}|u|^{2\sigma_2}|\nabla u|^2dx.
\end{equation*}
In summary this yields:
\begin{align*}
&\, \frac{d}{dt} E(t) = \, -\(a-\kappa(\si_2^2+\si_2)\)
\int_{\R^d}|u|^{2\sigma_2}|\nabla u|^2dx -2a\si_2 \int_{\R^d}
|u|^{2\si_2}\left\lvert \nabla |u|\right\rvert^2 \\
&-\kappa(\si_2^2+\si_2)\int_{\R^d}|u|^{2\sigma_2}|\RE(\bar\phi\nabla
u)-\IM(\bar\phi\nabla u)|^2dx 
 - 2a \int_{\R^d} V(x) |u|^{2\si_2 +2} \, dx \\
&- 2a \lambda \int_{\R^d}
 |u|^{2\si_2 +2\si_1 +2} \, dx  - 2a \kappa (\si_2+1)  \int_{\R^d}
 |u|^{4\si_2 +2} \, dx. 
\end{align*}
Under the assumption $0<\kappa<\frac{a}{\si_2^2+\si_2}$, and if
$\lambda \ge 0$ (defocusing case), all the terms on the right hand side are
non-positive, and we infer that $E$ is a non-increasing function:
$E(t)\le E(0)$, $\forall t\in [0,T]$.  

If $\l <0$ (focusing case), however, the term involving
the $L^{2\si_1+2\si_2+2}$ norm is positive. But since $\si_2>\si_1$
by assumption, we  
can interpolate this term using:
\[
\| u \|_{L^{2\si_1+2\si_2+2}} \le \| u \|^{\theta}_{L^{4\si_2+2}} \,
\| u \|^{1-\theta} _{L^{2\si_2+2}} \, ,  
\]
with 
\[
\frac{1}{\si_1+\si_2+1} = \frac{\theta}{2\si_2+1} +
\frac{1-\theta}{\si_2+1}: \quad \theta =
\frac{\si_1(2\si_2+1)}{\si_2(\si_1+\si_2+1)}\in (0,1).
\]
Denoting $\gamma= \si_1/\si_2$, this implies that
\[
\| u \|^{2\si_1+2\si_2+2}_{L^{2\si_1+2\si_2+2}} \le \| u \|^{\gamma(4\si_2+2)}_{L^{4\si_2+2}} \, \| u \|^{(1-\gamma)(2\si_2+2)} _{L^{2\si_2+2}} \, 
\le \eps  \| u \|^{4\si_2+2}_{L^{4\si_2+2}}  + \frac{1}{\eps^{\gamma/(1-\gamma)}}  \| u \|^{2\si_2+2} _{L^{2\si_2+2}}, 
\]
where we have used Young inequality. In summary this yields
\begin{align*}
\frac{d}{dt} E(t) \le &  - 2a \big ( \kappa (\si_2+1) -| \lambda| \eps \big)  \int_{\R^d} |u|^{4\si_2 +2} \, dx + \frac{2a |\lambda| }{\eps^{\gamma/(1-\gamma)}} \int_{\R^d} |u|^{2\si_2 +2} \, dx.
\end{align*}
For $0< \eps \ll 1$, the coefficient in front of the first term
is negative, which allows to conclude, after an integration with
respect to time, that: 
\[
\forall t\in [0,T]: \ E(t)  \le E(0) + C  \int_0 ^T \int_{\R^d} |u|^{2\si_2 +2} \, dx \, dt < E(0) + C(\|u_0\|_{L^2}),
\]
since we have $u \in L^{2\si_2+2}([0,T] \times \R^d)$ from Lemma~\ref{lem:L2}.
\end{proof}


We are now in the position to prove global well-posedness of \eqref{eq:nlsp} 
under various conditions on the parameters. For the sake of a simpler presentation, 
we shall treat the case $\sigma_1 = \sigma_2$ separately, see Section~\ref{sec:equal} below.

\subsection{The case $\sigma_1 \not = \sigma_2$}\label{sec:nonequal}

In the following we shall consider two different powers and impose the following assumption.

\begin{hyp}[Nonlinearity]\label{hyp:NL} Let $\sigma_1 \not = \si_2$ and, in addition:
  \begin{enumerate}
  \item Defocusing case. If $\l\ge 0$, we assume
    \begin{equation*}
      0<\si_1,\si_2<\frac{2}{d-2}\quad (0<\si_1,\si_2\text{ if }d\le
      2).
    \end{equation*}
\item Focusing case. If $\l<0$, we assume
  \begin{itemize}
  \item Either, $\dis 0<\si_1<\frac{2}{d}$ and $\dis
    0<\si_2<\frac{2}{(d-2)_+}$,  
 \item Or  $\dis \frac{2}{d}\le \si_1<\si_2<\frac{2}{(d-2)_+}$.
  \end{itemize}
  \end{enumerate}
\end{hyp}

\begin{remark}
 Assumption~\ref{hyp:NL} can be understood as follows: If without
 damping ($a=0$), the solution of \eqref{eq:nlsp} is global in time,
 then the strength of the nonlinear damping plays no role. On the
 other hand, if finite time blow-up may occur in the Hamiltonian case
 (i.e., $\l<0$ and $\si_1\ge 2/d$), then the damping is assumed to be
 stronger than the attractive interaction, in terms of the power
 $\si_2$.  
\end{remark}

\begin{theorem}[Global existence I]\label{theo:cauchy}
Let $d\ge 1$,  $a>0$, and $\om_1,\dots,\omega_d\ge 0$. Under
Assumption~\ref{hyp:NL}, for any 
$u_0\in\Sigma$, \eqref{eq:nlsp} has a unique solution
\begin{equation*}
  u \in C(\R_+;\Sigma)\cap L^\infty(\R_+;H^1)\cap L^{2\si_2+2}(\R_+\times \R^d)\cap
  \bigcap_{j=1,2}L^{\frac{4\si_j+4}{d\si_j}}_{\rm 
      loc}\(\R_+;L^{2\si_j+2}(\R^d)\).
\end{equation*}
Moreover, if $\om_j>0$ for all $j\in \{1,\dots,d\}$, then we also have
$u \in L^\infty(\R_+;\Sigma)$.
\end{theorem}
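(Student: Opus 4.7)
The plan is to combine Proposition~\ref{prop:enbound} with the blow-up alternative Corollary~\ref{cor:alternative}. Proposition~\ref{prop:Sigmaloc2} already provides a unique local solution $u\in C([0,T];\Sigma)$; Corollary~\ref{cor:alternative} then reduces the question of global existence to preventing $\|\nabla u(t)\|_{L^2}$ from blowing up. I will in fact establish a uniform-in-time bound on $\|\nabla u(t)\|_{L^2}$, which simultaneously yields $u\in L^\infty(\R_+;H^1)$. The global spacetime bound $u\in L^{2\sigma_2+2}(\R_+\times\R^d)$ is already contained in Lemma~\ref{lem:L2}, and the local Strichartz-type bounds $L^{(4\sigma_j+4)/(d\sigma_j)}_{\mathrm{loc}}(\R_+;L^{2\sigma_j+2})$ come from iterating the local theory on bounded sub-intervals once $H^1$ control is in hand.

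I would fix $\kappa\in(0,a/(\sigma_2^2+\sigma_2))$ and invoke Proposition~\ref{prop:enbound} to get $E(t)\le E(0)+C(\|u_0\|_{L^2})$ on $[0,T]$, with constant independent of $T$. In the defocusing case $\lambda\ge 0$, all four contributions to $E$ are non-negative, so a uniform bound on $\tfrac12\|\nabla u\|_{L^2}^2$ is read off immediately. The real work is in the focusing case $\lambda<0$, where the term $\tfrac{\lambda}{\sigma_1+1}\|u\|_{L^{2\sigma_1+2}}^{2\sigma_1+2}$ is negative and must be absorbed.

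I plan to split the focusing case according to Assumption~\ref{hyp:NL}. In subcase 2(a), $\sigma_1<2/d$, Gagliardo--Nirenberg gives $\|u\|_{L^{2\sigma_1+2}}^{2\sigma_1+2}\le C\|\nabla u\|_{L^2}^{d\sigma_1}\|u\|_{L^2}^{2+2\sigma_1-d\sigma_1}$; since $d\sigma_1<2$ and $\|u(t)\|_{L^2}$ is controlled by Lemma~\ref{lem:L2}, Young's inequality absorbs the bad term into $\tfrac12\|\nabla u\|_{L^2}^2$ up to a constant depending only on $\|u_0\|_{L^2}$. Subcase 2(b), $2/d\le\sigma_1<\sigma_2$, is the step I expect to be the main obstacle, since Gagliardo--Nirenberg is no longer subquadratic in $\|\nabla u\|_{L^2}$. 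The key idea is to interpolate in Lebesgue spaces between $L^2$ and $L^{2\sigma_2+2}$:
\begin{equation*}
\|u\|_{L^{2\sigma_1+2}}^{2\sigma_1+2}\le \|u\|_{L^2}^{(1-\theta)(2\sigma_1+2)}\,\|u\|_{L^{2\sigma_2+2}}^{2\sigma_1(\sigma_2+1)/\sigma_2},
\end{equation*}
and to observe that the exponent $2\sigma_1(\sigma_2+1)/\sigma_2$ is strictly less than $2\sigma_2+2=2(\sigma_2+1)$ precisely because $\sigma_1<\sigma_2$. A Young inequality with small parameter then absorbs the offending term into the positive contribution $\kappa\|u\|_{L^{2\sigma_2+2}}^{2\sigma_2+2}$ already present in $E$, leaving a uniform bound on $\tfrac12\|\nabla u\|_{L^2}^2$.

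With the uniform $H^1$ bound in hand, Corollary~\ref{cor:alternative} promotes the local solution to a global one with all the claimed regularity. For the last claim, if $\omega_j>0$ for every $j$, then $V(x)\ge \tfrac12(\min_j\omega_j^2)|x|^2$, and combining the uniform bound on $E(t)$ with the control of the focusing term derived just above yields a uniform bound on $\int V|u|^2\,dx$, hence on $\|xu(t)\|_{L^2}$. Together with the $L^2$ bound from Lemma~\ref{lem:L2}, this upgrades the global solution to $u\in L^\infty(\R_+;\Sigma)$.
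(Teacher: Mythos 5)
Your overall strategy --- a uniform $H^1$ bound from the modified energy of Proposition~\ref{prop:enbound}, then the blow-up alternative of Corollary~\ref{cor:alternative} --- is exactly the paper's, and your treatment of the defocusing case and of the focusing case $2/d\le\sigma_1<\sigma_2$ coincides with the paper's proof: the same Lebesgue interpolation of $\|u\|_{L^{2\sigma_1+2}}$ between $L^2$ and $L^{2\sigma_2+2}$ (your exponent $2\sigma_1(\sigma_2+1)/\sigma_2<2\sigma_2+2$ is the paper's $(1-\beta)(2\sigma_2+2)$ with $1-\beta=\sigma_1/\sigma_2$), followed by Young's inequality with $\eps$ small enough that the resulting $L^{2\sigma_2+2}$ contribution is absorbed by the $\kappa$-term already present in $E$. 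Your derivation of the $\Sigma$-bound from $\int V|u|^2\,dx$ when all $\om_j>0$ is also the intended one.

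There is, however, one genuine gap. You invoke Proposition~\ref{prop:enbound} across the whole focusing case, but that proposition requires $\sigma_2>\sigma_1$ when $\lambda<0$: its proof cannot absorb the positive term $2a|\lambda|\int|u|^{2\sigma_1+2\sigma_2+2}\,dx$ in the energy dissipation identity without interpolating between $L^{2\sigma_2+2}$ and $L^{4\sigma_2+2}$, which needs $\sigma_1<\sigma_2$. Subcase 2(a) of Assumption~\ref{hyp:NL} permits $\sigma_2\le\sigma_1<2/d$, and in that configuration the bound $E(t)\le E(0)+C$ is simply not available, so your Gagliardo--Nirenberg step has nothing to start from. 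The fix is the one the paper uses: when $\sigma_1,\sigma_2<2/d$ the last assertion of Corollary~\ref{cor:alternative} gives global existence with no energy argument at all (via Lemma~\ref{lem:L2} and the Tsutsumi-type argument), and the interpolation machinery is only needed once $\sigma_1\ge 2/d$, where Assumption~\ref{hyp:NL} guarantees $\sigma_2>\sigma_1$; note that in the remaining part of 2(a), namely $\sigma_1<2/d\le\sigma_2$, one automatically has $\sigma_2>\sigma_1$ and both your GN route (absorbing into the kinetic term) and the paper's interpolation (absorbing into the $\kappa$-term) apply. With that case split repaired, your argument is correct and essentially identical to the paper's.
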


\begin{proof} In the defocusing situation $\l>0$, Proposition \ref{prop:enbound} immediately implies a uniform (in-time) bound on $\| \nabla u(t)\|^2_{L^2} \lesssim E(0)$, since
$E(t)$ is the sum of four non-negative terms. In view of the blow-up alternative, stated in Corollary~\ref{cor:alternative}, we thus infer global well-posedness.

For the focusing situation $\l<0$, we note that the mass-subcritical case $\si_1,\si_2< 2/d$, has already been dealt with in Corollary~\ref{cor:alternative} (including 
$\si_1=\si_2=2/d)$. The moment $\si_1\ge 2/d$, we require $\si_2>\si_1$ which allows us to interpolate the potential energy of the attractive nonlinearity 
similarly to the calculation above, namely:
\begin{align*}
 \| u(t) \|^{2\si_1+2}_{L^{2\si_1+2}}\le  \| u(t) \|^{2\beta}_{L^2}   \, \| u(t) \|^{(1-\beta) (2\si_2+2)}_{L^{2\si_2+2}} \le {\eps}^{(\beta-1)/\beta} \| u(t) \|_{L^2}^2 + \eps \| u(t) \|^{2\si_2+2}_{L^{2\si_2+2}},
 \end{align*} 
for any $\eps > 0$ and an appropriately chosen $0<\beta<1$. Now fix $\eps = 2 \kappa (\si_1+1)/|\lambda|$ to obtain
\begin{align*}
\| \nabla u(t)\|^2_{L^2} \le 2 E(t) + C  \| u(t) \|^{2}_{L^2} ,
\end{align*} 
for some $C= C(\lambda, \kappa, \si_j)>0$. In view of Corollary~\ref{cor:alternative} and Proposition \ref{prop:enbound} this yields $\| \nabla u(t)\|^2_{L^2}\lesssim E(0) + \|u_0\|^2_{L^2}$ and we are done.
\end{proof}

\subsection{The case $\sigma_1 = \sigma_2$}\label{sec:equal}

In the case where $\sigma_1 = \sigma_2\equiv \si$, \eqref{eq:nlsp} simplifies to
\begin{equation}  \label{eq:nlssi}
      i\d_t u + \frac{1}{2}\Delta u  =V(x) u + (\l - i a)
  |u|^{2\si} u ,\quad  u_{\mid t=0}=u_0\in \Sigma. 
\end{equation}
Formally, this model can be considered as the diffusionless limit of
the (generalized) complex Ginzburg-Landau equation. 
Concerning global existence of \eqref{eq:nlssi}, we shall only be interested in the $L^2$-supercritical situation ($\si > 2/d$), in view of the last assertion 
in Corollary~\ref{cor:alternative}.

\begin{theorem}[Global existence II]\label{theo:cauchysi}
Let $d\ge 1$,  $a>0$, $\si > 2/d$, and $\om_1,\dots,\omega_d\ge 0$. Assume that
\begin{enumerate}
\item Either $\lambda \ge 0$,
\item Or $\lambda <0$ and $a\ge\min\(\sigma,\sqrt\sigma\) |\l|$.
\end{enumerate} 
Then, for all $u_0\in\Sigma$, \eqref{eq:nlssi} has a unique solution
\begin{equation*}
  u \in C(\R_+;\Sigma)\cap L^\infty(\R_+;H^1)\cap
  L^{2\si+2}(\R_+\times \R^d)\cap L^{\frac{4\si+4}{d\si}}_{\rm  
      loc}\(\R_+;L^{2\si+2}(\R^d)\).
\end{equation*}
Moreover, if $\om_j>0$ for all $j\in \{1,\dots,d\}$, then we also have
$u \in L^\infty(\R_+;\Sigma)$.
\end{theorem}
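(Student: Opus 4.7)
Local existence on some $[0,T]$ is given by Proposition \ref{prop:Sigmaloc2} with $\si_1=\si_2=\si$, and by the blow-up alternative (Corollary \ref{cor:alternative}) it suffices to establish a uniform-in-time bound on $\|\nabla u(t)\|_{L^2}$. The plan is to adapt the modified energy argument of Section \ref{sec:nonequal}, simplified by the fact that when $\si_1=\si_2$ the two nonlinear contributions $-2a\l\int|u|^{2\si_1+2\si_2+2}$ and $-2a\kappa(\si_2+1)\int|u|^{4\si_2+2}$ arising in the proof of Proposition \ref{prop:enbound} merge into a single term, so the interpolation step of that proof (which required $\si_2>\si_1$) becomes unnecessary.

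With the same energy functional
\begin{equation*}
E(t) = \tfrac12\|\nabla u\|_{L^2}^2 + \int V|u|^2 + \Bigl(\tfrac{\l}{\si+1}+\kappa\Bigr)\int|u|^{2\si+2},
\end{equation*}
the derivative takes the form $dE/dt = -(a-\kappa\si(\si+1))B - 2a\si B' - \kappa\si(\si+1)B'' - 2a\int V|u|^{2\si+2} - 2a(\l+\kappa(\si+1))\int|u|^{4\si+2}$, where $B$, $B'$, $B''$ denote the three quadrature terms from the proof of Proposition \ref{prop:enbound}. In the defocusing case $\l\ge 0$, any $\kappa\in(0,a/(\si(\si+1)))$ makes every contribution non-positive; coupled with the coercivity $E\ge\tfrac12\|\nabla u\|^2$, this yields the bound. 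In the focusing case $\l<0$, the natural choice $\kappa=|\l|/(\si+1)$ simultaneously annihilates the $|u|^{4\si+2}$ term in $dE/dt$ and reduces $E$ to $\tfrac12\|\nabla u\|^2+\int V|u|^2$, still coercive; when $a\ge\si|\l|$ (handling $\si\le 1$ via $\min(\si,\sqrt\si)=\si$, plus part of the $\si\ge 1$ regime), the remaining coefficient of $B$ is non-positive and the argument closes.

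The delicate sub-case is $\si>1$ with $\sqrt{\si}\,|\l|\le a<\si|\l|$, in which the coefficient of $B$ is positive. The key is to exploit the $B''$ term via the polar factorization $|\nabla u|^2=|\nabla|u||^2+|u|^2|S|^2$ (with $S:=-i\bar\phi\nabla\phi$), giving $B=B'+B'''$ for $B''':=\int|u|^{2\si+2}|S|^2$, together with the expansion $B''=B'+B'''-2\int|u|^{2\si+1}\nabla|u|\cdot S$ and the pointwise Cauchy--Schwarz bound $|\int|u|^{2\si+1}\nabla|u|\cdot S|\le\sqrt{B'B'''}$. Substituting into $dE/dt$ with $\kappa=|\l|/(\si+1)$ collapses the problematic contributions to $-a(2\si+1)B'-aB'''+2\si|\l|\sqrt{B'B'''}$ (modulo non-positive remainders), and Young's inequality $2\sqrt{B'B'''}\le B'/\alpha+\alpha B'''$ with the weight $\alpha=1/\sqrt\si$ transforms this into $-(a(2\si+1)-\si^{3/2}|\l|)B'-(a-\sqrt\si\,|\l|)B'''$; both coefficients are non-negative under $a\ge\sqrt\si\,|\l|$ since $\si^{3/2}/(2\si+1)\le\sqrt\si$. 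The main obstacle is precisely identifying this Young weight---equivalently, seeing that the favorable quadratic-form structure in $(\sqrt{B'},\sqrt{B'''})$ emerges from the polar decomposition. The remaining assertions ($L^{2\si+2}(\R_+\times\R^d)$, the local Strichartz integrability, and the $L^\infty(\R_+;\Sigma)$ bound when all $\om_j>0$) follow directly from Lemma \ref{lem:L2}, Proposition \ref{prop:strichartz}, and the coercivity of $\int V|u|^2$ over $\|xu\|_{L^2}^2$.
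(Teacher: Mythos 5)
Your proof is correct and, despite the different packaging, coincides with the paper's: in the focusing case your choice $\kappa=|\l|/(\si+1)$ cancels the nonlinear term and turns the modified energy into exactly the linear functional $E_{\rm lin}(t)=\frac12\|\nabla u\|_{L^2}^2+\int V|u|^2$ that the paper differentiates directly, and your decomposition $B=B'+B'''$, $B''=B'+B'''-2\int|u|^{2\si+1}\nabla|u|\cdot S$ followed by Cauchy--Schwarz and Young with weight $1/\sqrt{\si}$ is algebraically the same estimate the paper performs on the cross term $2\si|\l|\int|u|^{2\si}\nabla|u|\cdot\IM(\bar\phi\nabla u)$, yielding the identical thresholds $a\ge\si|\l|$ and $a\ge\sqrt{\si}\,|\l|$. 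The defocusing case and the concluding assertions are handled as in the paper (Proposition~\ref{prop:enbound}, Corollary~\ref{cor:alternative}, Lemma~\ref{lem:L2}), so no gap remains.
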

 
\begin{proof}
The defocusing case $\lambda \ge 0$, can be treated as in the proof of Theorem \ref{theo:cauchy} above. 
It therefore remains to consider the case $\l<0$ and $\sigma >
2/d$. Note that we cannot invoke
Proposition~\ref{prop:enbound},  
which requires $\sigma_1<\sigma_2$. Instead (following an idea in
\cite{AnSp10}), we  consider the linear energy functional: 
\[
E_{\rm lin} (t) = \frac{1}{2} \| \nabla u(t)\|_{L^2}^2 + \int_{\R^d} V(x) |u(t,x)|^2 \, dx.
\] 
Obviously, it suffices to prove that $E_{\rm lin} (t)$ is uniformly
bounded in time.  
Differentiating $E_{\rm lin}(t)$ and using Equation~\eqref{eq:nlssi} yields
\begin{align}\label{eq:lin_en_der}
\frac{d}{d t} \, E_{\rm lin}(t)= & \,
 a \int_{\R^d}| u |^{2\si} \RE(\bar{ u }\Delta u )\, d x  +|\lambda| \int_{\R^d}| u |^{2\si} \IM( u \Delta\bar{ u })\, d x \\
 &\, - 2 a \int_{\R^d} V(x) |u|^{2\si+2} \, dx.\notag
\end{align}
Using the same arguments as in the proof of
Proposition~\ref{prop:enbound}, we infer 
\begin{align*}
\frac{d}{d t} \,  E_{\rm lin}(t)\le & -\(a-|\l|\si\)
\int_{\R^d}|u|^{2\sigma}|\nabla u|^2dx -2a\si \int_{\R^d}
|u|^{2\si}\left\lvert \nabla |u|\right\rvert^2 \\
&-|\l|\si \int_{\R^d}|u|^{2\sigma}|\RE(\bar\phi\nabla
u)-\IM(\bar\phi\nabla u)|^2dx 
 - 2a \int_{\R^d} V(x) |u|^{2\si +2} \, dx . 
\end{align*}
Thus, if $a\ge \sigma|\lambda|$, we  obtain $E_{\rm lin}(t) \le E_{\rm
  lin}(0) < \infty$, for all $t\in [0,T]$.
  On the other hand, from \eqref{eq:lin_en_der} we have
  \begin{equation*}
  \begin{aligned}
  \frac{d}{d t}\, E_{\rm lin}(t)=&\,
  -a\sigma\int_{\R^d}|u|^{2\sigma}|\nabla|u||^2\,dx
  -a\int_{\R^d}|u|^{2\sigma}|\nabla\ u|^2\,dx\\
  &+2\sigma|\lambda|\int_{\R^d}|u|^{2\sigma}\nabla|u|\cdot\IM(\bar{\phi}\nabla u)\,dx
  - 2 a \int_{\R^d} V(x) |u|^{2\si+2} \, dx.
  \end{aligned}
  \end{equation*}
  By using Cauchy-Schwarz and then Young inequality, we see that the
  third term in the right hand side is bounded by 
  \begin{equation*}
  a\sigma\int|u|^{2\sigma}|\nabla|u||^2\,dx+\frac{|\lambda|^2\sigma}{a}\int|u|^{2\sigma}|\nabla u|^2\,dx,
  \end{equation*}
  hence if $|\lambda|\sqrt\sigma\le a$, then again we have $E_{\rm lin}(t) \le E_{\rm lin}(0) < \infty$, for all $t\in [0,T]$.
  This establishes global well-posedness of \eqref{eq:nlssi}, in the
second case. 
\end{proof}

\begin{remark}
The second case of Theorem \ref{theo:cauchysi} shows that if the damping is of the same strength (in terms of its power) 
as an attractive interaction nonlinearity, the relative size of the respective coefficients starts to play a role. Note that the larger the 
dimension $d\ge 1$, the smaller $a$ can be
chosen to ensure global existence. We remark that the numerical 
experiments presented in \cite{FiKl12} always consider $a \ll |\l|$ and thus they do not show the aforementioned possibility for global existence.
\end{remark}

At this point, we have proved global
existence in all the cases listed in
Theorem~\ref{theo:cauchyglobal}. We now turn to the large time
behavior, in cases where the solution is defined for all $t\ge 0$.

\section{Large time behavior}
\label{sec:asymptotic}

In view of the dissipation equation \eqref{eq:conlaw}, the damping is
expected to have a significant influence  
on the long time behavior of solutions to \eqref{eq:nlsp}.
Indeed, if we consider, for a moment,  the case of $x$-independent solutions, then 
\eqref{eq:conlaw} simplifies to 
the following ordinary differential equation
\begin{equation*}
\d_t \rho = - 2a \rho^{\sigma_2+1} , \quad \rho_{\mid t=0}=\rho_0:=|u_0|^2 ,
\end{equation*}
the solution of which is given by
\begin{equation}\label{eq:odesol}
\rho(t) = \frac{\rho_0}{(1+2 a t \rho_0^{\sigma_2}  )^{1/\sigma_{2}}}.
\end{equation}
Thus, one might expect the solution to vanish like $\|u(t)\|_{L^2}^2=
\O(t^{-1/\sigma_2})$, as $t\to +\infty$.  
We shall see below, however, that this rather naive argument does
not yield the correct  
long time behavior of $u$ in the case of where $V=0$. The idea is that
the dispersion due to the Laplacian may prevent the damping from
taking the wave function $u$ to zero. 

We mention in passing that the limiting case $\si_2=0$ corresponds to
an exponential decay rate, whose PDE analogue was studied in
\cite{OhTo09}, and the case $\si_2<0$ leads to finite time extinction
(see \cite{CaGa11} for the analogue regarding Schr\"odinger equation on
compact manifolds).

\subsection{Asymptotic extinction with full confinement}
\label{sec:extinct}
 In this subsection, we consider the case of a fully confining
 potential, in the sense 
 that we suppose $\om_j>0$ for \emph{all} $j$.
We start with a simple estimate, which can be viewed as a dual version
of Nash inequality.
\begin{lemma}[Localization] \label{lem:dualnash}
  Let $d\ge 1$. For all $p\ge 2$, there exists $C$
  such that for all $f\in \Sch(\R^d)$, 
\begin{equation*}
  \|f\|_{L^2(\R^d)}\le C
  \|f\|_{L^p(\R^d)}^{\theta}\|xf\|_{L^2(\R^d)}^{1-\theta},\quad \theta
  =  \frac{1}{1+d(1/2-1/p)}\in (0,1].
\end{equation*}
\end{lemma}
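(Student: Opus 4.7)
The plan is to prove this by a splitting-and-optimization argument, which is the standard route to Nash-type inequalities. The case $p=2$ is trivial since $\theta=1$, so I will assume $p>2$ throughout.

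Given a parameter $R>0$, I would split
\begin{equation*}
\|f\|_{L^2}^2 = \int_{|x|\le R} |f(x)|^2\,dx + \int_{|x|>R}|f(x)|^2\,dx
\end{equation*}
and estimate each piece separately. For the inner region, H\"older's inequality with exponents $p/2$ and $(p/2)'$ gives
\begin{equation*}
\int_{|x|\le R}|f|^2\,dx \le |B_R|^{1-2/p}\,\|f\|_{L^p}^2 \le C\,R^{d(1-2/p)}\|f\|_{L^p}^2.
\end{equation*}
For the outer region, I would use the trivial pointwise bound $1\le |x|^2/R^2$ to write
\begin{equation*}
\int_{|x|>R}|f|^2\,dx \le \frac{1}{R^2}\int_{|x|>R}|x|^2|f|^2\,dx \le \frac{1}{R^2}\|xf\|_{L^2}^2.
\end{equation*}

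Setting $\alpha := d(1-2/p)>0$, this gives
\begin{equation*}
\|f\|_{L^2}^2 \le C\,R^{\alpha}\|f\|_{L^p}^2 + R^{-2}\|xf\|_{L^2}^2.
\end{equation*}
The only remaining step is to optimize the right-hand side in $R$. Taking the derivative in $R$ and setting it to zero, or just balancing the two terms, gives the choice
\begin{equation*}
R = \left(\frac{\|xf\|_{L^2}}{\|f\|_{L^p}}\right)^{2/(\alpha+2)}
\end{equation*}
(up to a constant), at which both terms equal $C\,\|f\|_{L^p}^{4/(\alpha+2)}\,\|xf\|_{L^2}^{2\alpha/(\alpha+2)}$. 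Taking square roots yields the advertised inequality with exponent $\theta = 2/(\alpha+2) = 1/(1+d(1/2-1/p))$.

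There is no real obstacle; the only thing to check is that the exponent matches the one in the statement and that the endpoint cases behave correctly (if $\|xf\|_{L^2}=\infty$ the inequality is vacuous, if $\|f\|_{L^p}=0$ then $f\equiv 0$ and both sides vanish). Density of $\mathcal S(\R^d)$ in the relevant spaces is immediate since the statement is for Schwartz functions.
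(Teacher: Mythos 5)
Your proof is correct and takes essentially the same approach as the paper's: split into the regions $|x|\le R$ and $|x|>R$, apply H\"older's inequality on the ball and the weight $|x|/R$ outside, then balance the two terms by optimizing in $R$. The only cosmetic difference is that you estimate $\|f\|_{L^2}^2$ rather than $\|f\|_{L^2}$, which changes nothing.
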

\begin{proof}
  For  $R>0$, write
  \begin{align*}
    \|f\|_{L^2(\R^d)} &\le \|f\|_{L^2(|x|<R)}+\|f\|_{L^2(|x|>R)}\\
&\le
    C_d R^{d(1/2-1/p)}\|f\|_{L^p(\R^d)} + \frac{1}{R}\|xf\|_{L^2(\R^d)}, 
  \end{align*}
where we have applied H\"older's inequality for the first term, and
simply multiplied and divided by $|x|$ for the second term. 
Both terms on the right hand side have the same order of magnitude if
$R^{1+d(1/2-1/p)}= \|xf\|_{L^2}/\|f\|_{L^p}$. Using this value of $R$
yields the result.
\end{proof}

Recall that in the case with confining potential ($\om_j>0$ for all
$j\in \{1,\dots,d\}$), the solution satisfies  
$u \in L^\infty(\R_+;\Sigma)$. This can be used to obtain an estimate
for the time-decay of the solution. 

\begin{proposition}[Asymptotic extinction I]\label{prop:asymextinct}
Let $d\ge 1$, $a>0$, $\om_1,\dots,\omega_d>0$, and
  $u_0\in\Sigma$. In either of the cases mentioned in
  Theorem~\ref{theo:cauchyglobal}, the solution to \eqref{eq:nlsp}
  satisfies $u\in L^\infty(\R_+;\Sigma)$ and there exists $C>0$ such that
\begin{equation*}
  \|u(t)\|_{L^2}^2\le Ct^{-\frac{2}{(d+2)\si_2}},\quad
  \forall t\ge 1. 
 \end{equation*}
\end{proposition}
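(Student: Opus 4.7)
The plan is to combine three ingredients already at our disposal: the mass dissipation identity \eqref{eq:L2}, the dual Nash inequality of Lemma~\ref{lem:dualnash}, and the uniform bound $u\in L^\infty(\R_+;\Sigma)$ coming from the full confinement assumption. First I would establish that $u\in L^\infty(\R_+;\Sigma)$. This was already noted in Theorems~\ref{theo:cauchy} and~\ref{theo:cauchysi}: under $\om_j>0$ for all $j$, the potential term in the (modified) energy $E(t)$ controls $\|xu(t)\|_{L^2}^2$ since $V(x)\ge c|x|^2$ with $c=\tfrac12\min_j\om_j^2>0$, so that boundedness of $E(t)$ gives a uniform bound on $\|xu(t)\|_{L^2}$ (and $\|\nabla u(t)\|_{L^2}$), in all the cases covered by Theorem~\ref{theo:cauchyglobal}.

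Next, set $p=2\si_2+2$ in Lemma~\ref{lem:dualnash}. A direct computation gives
\begin{equation*}
\theta=\frac{2\si_2+2}{(d+2)\si_2+2},
\end{equation*}
so that, using $\|xu(t)\|_{L^2}\le M$ uniformly in $t$,
\begin{equation*}
\|u(t)\|_{L^2}^2\le C\|u(t)\|_{L^{2\si_2+2}}^{2\theta}M^{2(1-\theta)}.
\end{equation*}
Raising both sides to the power $(\si_2+1)/\theta$ and rearranging yields
\begin{equation*}
\|u(t)\|_{L^{2\si_2+2}}^{2\si_2+2}\ge C'\|u(t)\|_{L^2}^{2(\si_2+1)/\theta}=C'\|u(t)\|_{L^2}^{(d+2)\si_2+2}.
\end{equation*}

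Plugging this into the dissipation identity \eqref{eq:L2} and setting $y(t):=\|u(t)\|_{L^2}^2$, I get the scalar differential inequality
\begin{equation*}
y'(t)\le -2aC'\, y(t)^{1+\alpha},\qquad \alpha=\frac{(d+2)\si_2}{2}.
\end{equation*}
Integrating the inequality for $y^{-\alpha}$ gives $y(t)^{-\alpha}\ge y(0)^{-\alpha}+2aC'\alpha t$, hence $y(t)\le (2aC'\alpha t)^{-1/\alpha}$ for $t\ge 1$, which is exactly the announced bound $\|u(t)\|_{L^2}^2=O(t^{-2/((d+2)\si_2)})$.

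The only nontrivial step is the dual Nash-type interpolation of Lemma~\ref{lem:dualnash}, which does the essential work of converting the $L^{2\si_2+2}$ dissipation into a closed inequality for $\|u\|_{L^2}^2$; the rest is a routine ODE comparison. I expect no further obstacle, and the argument goes through uniformly in the various focusing/defocusing subcases listed in Theorem~\ref{theo:cauchyglobal}, since all that is used from them is the global-in-time bound $u\in L^\infty(\R_+;\Sigma)$.
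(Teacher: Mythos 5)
Your proposal is correct and follows essentially the same route as the paper: Lemma~\ref{lem:dualnash} with $p=2\si_2+2$ combined with the uniform $\Sigma$-bound turns the mass dissipation identity \eqref{eq:L2} into the differential inequality $\dot y+Cy^{1+(d+2)\si_2/2}\le 0$ for $y=\|u\|_{L^2}^2$, and the ODE comparison gives the stated rate. The only difference is cosmetic: you carry out the integration of $y^{-\alpha}$ explicitly where the paper simply invokes comparison with the ODE.
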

\begin{proof}
In view of Lemma~\ref{lem:dualnash} with $p=2\si_2+2$, we have, since
$u\in L^\infty(\R_+;\Sigma)$ by assumption,
\begin{equation*}
  \|u(t)\|_{L^2}\le C \|u(t)\|_{L^{2\si_2+2}}^{\theta},\quad \theta =
  \frac{2\si_2+2}{(d+2)\si_2+2}. 
\end{equation*}
Along with \eqref{eq:L2}, this  yields
\begin{equation*}
  \frac{d}{dt}\|u(t)\|_{L^2}^2 + 2a C
  \|u(t)\|_{L^2}^{\frac{2\si_2+2}{\theta}}\le 0.
\end{equation*}
Therefore, $y(t)=
\|u(t)\|_{L^2}^2$ satisfies a differential inequality of the form 
\begin{equation*}
  \dot y(t) + C y(t)^p\le 0,\quad p =1+\frac{d+2}{2}\si_2.
\end{equation*}
The comparison with the ordinary differential equation yields $y(t) =
\O(t^{-1/(p-1)})$ for $t\ge 1$, since $p>1$, hence the result. 
\end{proof}



\subsection{Non-vanishing solutions}
\label{sec:scatt}
In the following, we assume that there is no confining potential, i.e.  $V=0$ in
\eqref{eq:nlsp}. Then the solution must not be expected to vanish as
$t\to +\infty$, as shown by the following result.
\begin{proposition}[Scattering without potential]\label{prop:scatt}
  Let $d\ge 1$, $\lambda,a \in \R$ and $2/d\le \si_1,\si_2\le 2/(d-2)$. Let
  $R>0$. There exists $T$ depending only 
  on $d$, $|\lambda|$, $|a|$ and $R$ such that for all $u_+\in H^1(\R^d)$
  with $\|u_+\|_{H^1}\le R$, there exists a solution $u\in
  C([T,\infty);H^1)$ to 
  \begin{equation*}
    i\d_t u + \frac{1}{2}\Delta u  = \l
  |u|^{2\si_1} u -i a |u|^{2\si_2}u,
  \end{equation*}
such that 
\begin{equation*}
\lim_{t\to +\infty} \dis \|u(t)-e^{i\frac{t}{2}\Delta}u_+\|_{H^1(\R^d)} = 0.
\end{equation*}
\end{proposition}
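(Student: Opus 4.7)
The plan is a standard construction of the wave operator via a contraction argument applied to the backward Duhamel formulation
\begin{equation*}
\Phi(u)(t) = e^{i\frac{t}{2}\Delta}u_+ + i\int_t^{+\infty} e^{i\frac{t-\tau}{2}\Delta}\bigl(\lambda|u|^{2\sigma_1}u - ia|u|^{2\sigma_2}u\bigr)(\tau)\,d\tau,
\end{equation*}
on a time interval $[T,+\infty)$ with $T$ chosen sufficiently large, depending only on $d$, $|\lambda|$, $|a|$ and $R$. The key structural fact is that since $V=0$, Proposition~\ref{prop:strichartz}(3) supplies Strichartz estimates with $\eta=+\infty$ and constants independent of the underlying time interval; this is what makes backward-in-time Duhamel iteration possible.

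For $j=1,2$, let $(q_j,r_j)$ be the admissible pair from \eqref{eq:exponents} with $r_j=2\sigma_j+2$, and work in the space
\begin{equation*}
Y_T := L^\infty\bigl([T,+\infty);H^1(\R^d)\bigr) \cap \bigcap_{j=1,2} L^{q_j}\bigl([T,+\infty);W^{1,r_j}(\R^d)\bigr),
\end{equation*}
equipped with the natural norm. The free evolution satisfies $\|e^{i\cdot\Delta/2}u_+\|_{L^\infty H^1}\le R$, while $\|e^{i\cdot\Delta/2}u_+\|_{L^{q_j}([T,+\infty);W^{1,r_j})}\to 0$ as $T\to+\infty$ by Strichartz combined with dominated convergence applied to the global-in-time Strichartz integrand. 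Applying Proposition~\ref{prop:strichartz}(2) to the Duhamel term together with H\"older's inequality, exactly as in the proof of Proposition~\ref{prop:Sigmaloc2}, yields an estimate of the form
\begin{equation*}
\|\Phi(u) - e^{i\cdot\Delta/2}u_+\|_{Y_T} \lesssim \sum_{j=1,2}\|u\|_{L^{\theta_j}_T L^{r_j}}^{2\sigma_j}\,\|u\|_{L^{q_j}_T W^{1,r_j}},
\end{equation*}
with $\theta_j$ as in \eqref{eq:exponents}. When $\sigma_j=2/d$ one has $\theta_j=q_j$, and smallness comes directly from the $L^{q_j}_T L^{r_j}$ factor. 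When $\sigma_j>2/d$, the non-admissible norm $L^{\theta_j}_T L^{r_j}$ is controlled by interpolation between $L^{q_j}_T L^{r_j}$ and $L^\infty_T L^{r_j}$, the latter dominated by $L^\infty_T H^1$ via Sobolev embedding. An analogous Lipschitz estimate gives contraction on a closed ball of radius $2CR$ in $Y_T$ for $T$ large enough; the resulting fixed point $u\in C([T,+\infty);H^1)$ is the desired solution. The convergence $\|u(t)-e^{it\Delta/2}u_+\|_{H^1}\to 0$ as $t\to+\infty$ follows immediately by re-running the same nonlinear estimate on $[t,+\infty)$: the right-hand side is a tail Strichartz norm, which vanishes by construction.

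The main obstacle lies in the range $\sigma_j>2/d$, where $\theta_j>q_j$ and the nonlinear estimate does not carry an automatic small factor. Smallness must be extracted either from the small Strichartz norm of the free evolution on $[T,+\infty)$ or from one of the interpolation factors, and the bookkeeping has to be arranged so that every nonlinear contribution absorbs at least one such small factor, since $\|u\|_{L^\infty_T H^1}$ is only bounded. A separate subtlety, which I would handle by working strictly inside $\sigma_j<2/(d-2)$ as elsewhere in the paper, is that the $H^1$-critical endpoint $\sigma_j=2/(d-2)$ falls outside the admissible range of Definition~\ref{def:adm} and would require concentration-compactness machinery beyond the present scheme.
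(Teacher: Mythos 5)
Your overall strategy coincides with the paper's: backward Duhamel formulation, a fixed point in tail Strichartz spaces on $[T,\infty)$, smallness of $\|e^{i\cdot\Delta/2}u_+\|_{L^{q_j}([T,\infty);L^{r_j})}$ as $T\to+\infty$, and the interpolation $\|u\|_{L^{\theta_j}_TL^{r_j}}\le \|u\|_{L^{q_j}_TL^{r_j}}^{\eta_j}\|u\|_{L^\infty_TL^{r_j}}^{1-\eta_j}$ with $\eta_j=q_j/\theta_j$ to handle $\theta_j>q_j$. However, there is a genuine gap at the contraction step, and it is not confined to the range $\si_j>2/d$: on a closed ball of radius $2CR$ in $Y_T$ with its natural norm, nothing forces the undifferentiated norms $\|u\|_{L^{q_j}_TL^{r_j}}$ to be small; they are only bounded by $2CR$. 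Consequently every nonlinear estimate closes with a constant of order $R^{2\si_j}$ that does not decrease as $T$ grows, so for large $R$ the map $\Phi$ neither sends your ball into itself nor contracts, no matter how $T$ is chosen. Your assertion that for $\si_j=2/d$ ``smallness comes directly from the $L^{q_j}_TL^{r_j}$ factor'' presupposes exactly what has to be built into the construction, and your closing remark that the ``bookkeeping has to be arranged'' names the obstacle without resolving it --- but that arrangement is the one nontrivial point of the proof.

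The fix, which is what the paper does, is to replace the symmetric ball by the asymmetric set
\begin{equation*}
X_T=\Big\{u\ :\ \|u\|_{L^\infty_TH^1}\le 2\|u_+\|_{H^1},\ \ \|u\|_{L^{q_j}_TW^{1,r_j}}\le 2C_{q_j}\|u_+\|_{H^1},\ \ \|u\|_{L^{q_j}_TL^{r_j}}\le 2\|u_{\rm free}\|_{L^{q_j}_TL^{r_j}}\Big\},
\end{equation*}
where $u_{\rm free}=e^{i\cdot\Delta/2}u_+$. The third constraint pins the no-derivative Strichartz norms to a quantity that tends to $0$ as $T\to+\infty$, while the derivative and $L^\infty_TH^1$ norms remain of size $R$; the interpolation then inserts the small factor $\|u_{\rm free}\|_{L^{q_j}_TL^{r_j}}^{2\si_j\eta_j}$ into every nonlinear term (here $\eta_j>0$ uses $\si_j<2/(d-2)$, so your remark about excluding the $H^1$-critical endpoint is consistent with the paper's proof, which also works strictly below it). One must additionally check that the third constraint propagates under $\Phi$, and that $X_T$ equipped with the metric of $L^{q_1}_TL^{r_1}\cap L^{q_2}_TL^{r_2}$ is complete so that the fixed point theorem applies. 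With these modifications your argument becomes the paper's proof; the final convergence statement, obtained by re-running the estimate on $[t,\infty)$, is correct as you describe it.
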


\begin{remark}
  By working with asymptotic states $u_+\in \Sigma$ instead of merely
  $H^1(\R^d)$, and using the operator $x+it\nabla$, the above result
  can be extended to the case where the lower bounds on $\si_1,\si_2$
  are relaxed (see for instance \cite{CazCourant}),
  \begin{equation*}
    \si_1,\si_2>1 \text{ if }d=1,\quad \si_1,\si_2>\frac{2}{d+2}
    \text{ if }d\ge 2. 
  \end{equation*}
\end{remark}

\begin{proof}[Sketch of the proof]
  This result follows from \cite{GiVe85b}
 (see also 
  \cite[Proposition~3.1]{Ginibre} for a simplification), and stems
  from a fixed point  
  argument applied to Duhamel's formulation of the above problem:
  \begin{equation*}
    u(t) = U(t)u_+ +i\lambda \int_t^\infty
    U(t-s)\(|u|^{2\si_1}u\)(s)ds +a \int_t^\infty
    U(t-s)\(|u|^{2\si_2}u\)(s)ds. 
  \end{equation*}
Denote by $\Phi(u)(t)$ the right hand side, and $u_{\rm free}(t) =
e^{i\frac{t}{2}\Delta}u_+$.  Let $r_j=2\si_j+2$ and $q_j>2$ be such
that $(q_j,2\si_j+2)$ 
is admissible, $j=1,2$, like in \eqref{eq:exponents}. 
With the notation $L^\beta_TY=L^\beta([T,\infty);Y)$, we
  introduce:  
  \begin{align*}
    X_T:=\Big\{ u\in C([T,\infty);H^1)\ ;\ &\left\|
    u\right\|_{L^{q_j}_TW^{1,2\si_j+2}} \le 2 C_{q_j}\|u_+\|_{H^1},\\
 \left\|  u\right\|_{L^\infty_TH^1} \le 2 \|u_+\|_{H^1}\, ,\quad
& \left\|  u\right\|_{L^{q_j}_T L^{2\si_j+2}} \le 2 \left\|
    u_{\rm free}\right\|_{L^{q_j}_T L^{2\si_j+2}},\ j=1,2 \Big\},
  \end{align*}
where $C_{q_j}$ is given by Proposition~\ref{prop:strichartz}. Resume
the numerology of the proof of Proposition~\ref{prop:Sigmaloc2}: we have
\begin{equation*}
  \frac{1}{r_j'}= \frac{1}{r_j}+\frac{2\si_j}{r_j},\quad
\frac{1}{q_j'}= \frac{1}{q_j}+\frac{2\si}{\theta_j},
\end{equation*}
where $q_j\le \theta_j<\infty$ since $2/d\le
\si_j<2/(d-2)$. For $u\in X_T$, Strichartz estimates and H\"older
inequality yield:
\begin{align*}
  \left\| \Phi(u)\right\|_{L^{q_j}_T W^{1,2\si_j +2}} &\le
  C_{q_j}\|u_+\|_{H^1} + C\sum_{\ell=1,2}\( \left\|
    |u|^{2\si_\ell}u\right\|_{L^{q_\ell '}_TL^{r_\ell '}} 
  + \left\| |u|^{2\si_\ell }\nabla u\right\|_{L^{q_\ell '}_TL^{r_\ell '}}\)\\
&\le C_{q_j}\|u_+\|_{H^1} +
C\sum_{\ell=1,2}\|u\|_{L^{\theta_\ell}_TL^{r_\ell}}^{2\si_\ell}\(
\|u\|_{L^{q_\ell}_T L^{r_\ell}} 
  +\|\nabla u\|_{L^{q_\ell}_T L^{r_\ell}} \)\\
&\le C_{q_j}\|u_+\|_{H^1} + C\sum_{\ell=1,2}
\|u\|_{L^{q_\ell}_TL^{r_\ell}}^{2\si_\ell\eta_\ell
  }\|u\|_{L^\infty_TL^{r_\ell}}^{2\si_\ell(1-\eta_\ell) }
  \|u\|_{L^{q_\ell}_T W^{1,r_\ell}}\ , 
\end{align*}
for $\eta_\ell=q_\ell/\theta_\ell\in (0,1]$. Sobolev embedding and the
definition of $X_T$ then imply: 
\begin{align*}
 \left\| \Phi(u)\right\|_{L^{q_j}_T W^{1,2\si_j +2}} \le
 C_{q_j}\|u_+\|_{H^1} +
 C\sum_{\ell=1,2}\left\|u_{\rm
     free}\right\|_{L^{q_\ell}_TL^{r_\ell}}^{2\si_\ell \eta_\ell 
  }\|u\|_{L^\infty_TH^1}^{2\si_\ell(1-\eta_\ell) } \|u\|_{L^{_\ell}_T W^{1,2\si_\ell
  +2}} .
\end{align*}
We have similarly
\begin{align*}
  \left\| \Phi(u)\right\|_{L^\infty_T H^1}&\le  \|u_+\|_{H^1} +
  C\sum_{\ell=1,2}\left\|u_{\rm
      free}\right\|_{L^{q_\ell}_TL^{r_\ell}}^{2\si_\ell\eta_\ell  
  }\|u\|_{L^\infty_TH^1}^{2\si_\ell(1-\eta_\ell) } \|u\|_{L^{q_\ell}_T W^{1,2\si_\ell
  +2}}, \\
\left\| \Phi(u)\right\|_{L^{q_j}_T L^{2\si_j +2}}& \le
  \left\|u_{\rm free}\right\|_{L^{q_j}_TL^{2\si_j+2}} 
+  C\sum_{\ell=1,2}\left\|u_{\rm
      free}\right\|_{L^{q_\ell}_TL^{r_\ell}}^{2\si_\ell\eta_\ell  
  }\|u\|_{L^\infty_TH^1}^{2\si_\ell(1-\eta_\ell) } \|u\|_{L^{q_\ell}_T W^{1,2\si_\ell
  +2}}.
\end{align*}
From Strichartz estimates, $u_{\rm free} \in L^{q_j}(\R;L^{r_j})$, so 
\begin{equation*}
  \left\|u_{\rm free}\right\|_{L^{q_j}_TL^{2\si_j+2}} \to 0\quad
  \text{as }T\to +\infty. 
\end{equation*}
Since $\eta_\ell>0$, we infer that $\Phi$ sends $X_T$ to itself, for
$T$ sufficiently large. 
\smallbreak

We have also, for $u_2,u_1\in X_T$:
\begin{align*}
  \left\| \Phi(u_2)-\Phi(u_1)\right\|_{L^{q_j}_T L^{r_j}}&\lesssim
 \sum_{\ell=1,2} \max_{k=1,2}\| u_k\|_{L^{\theta_\ell}_TL^{r_\ell}}^{2\si_\ell} \left\|
  u_2-u_1\right\|_{L^{q_\ell}_T L^{r_\ell}}\\
&\lesssim \sum_{\ell=1,2}\left\|u_{\rm
    free}\right\|_{L^{q_\ell}_TL^{r_\ell}}^{2\si_\ell\eta_\ell 
  }\|u_+\|_{H^1}^{2\si_\ell(1-\eta_\ell) }\left\|
  u_2-u_1\right\|_{L^{q_\ell}_T L^{r_\ell}}.
\end{align*}
Up to choosing $T$ larger, $\Phi$ is a contraction on $X_T$, equipped
with the $L^{q_1}_T L^{r_1}\cap L^{q_2}_T L^{r_2}$-norm.  Since this
space is complete (see e.g. \cite[Section~4.4]{CazCourant}), 
the proposition follows from the standard fixed point argument.
\end{proof}

Proposition~\ref{prop:scatt} rules out the asymptotic extinction of the solution, since, by invoking the triangle inequality, 
we directly infer
\begin{align*}
\lim_{t\to + \infty} \| u(t)\|_{L^2} \ge & \lim_{t\to + \infty} \big| \| e^{i\frac{t}{2}\Delta}u_+\| _{L^2} - \| u(t)-e^{i\frac{t}{2}\Delta}u_+ \|_{L^2} \big| \\
= &
\lim_{t\to + \infty}  \| e^{i\frac{t}{2}\Delta}u_+\| _{L^2} =   \| u_+\| _{L^2} ,
\end{align*}
due to mass conservation. The solution of the damped equation therefore does not decay to zero, due to the possibility of 
radiation escaping to infinity. The latter is no longer true if we
consider \eqref{eq:nlsp} on a compact manifold, instead of $\R^d$ (see the appendix).

\appendix

\section{Damped NLS on a compact manifold}
\label{sec:manifold}

In the following we shall consider $x\in M$, a $d$-dimensional compact Riemannian manifold without boundary. A particular example is 
$M=\T^d\equiv (\R/2\pi\Z)^d$, the $d$-dimensional torus. Since $M$ is compact by assumption, we do not gain anything from 
the inclusion of a confining potential $V$. We thus set $\omega_j=0$, for all $j=1,\dots, d $ and consider the equation
\begin{equation}
  \label{eq:nlspM}
\left\{
  \begin{aligned}
      i\d_t u + \frac{1}{2}\Delta u & = \l
  |u|^{2\si_1} u -i a |u|^{2\si_2}u,\quad t\ge 0,\ x\in M,\\
\quad u_{\mid t=0}&=u_0\in H^1(M), 
  \end{aligned}
\right.
\end{equation}
where $\Delta$ denotes the Laplace--Beltrami operator on $M$. 

The Hamiltonian analogue of \eqref{eq:nlspM} has been studied in \cite{BGT04}. 
Obviously, we cannot expect the dispersive nature of the free Schr\"odinger group $U(t)$ 
to hold on a compact manifold (a simple counterexample is given by the
eigenfunctions of the  
Laplace--Beltrami operator on $M$). It turns out that this is true even
locally in-time, see Remark~2.6 in \cite{BGT04}. 
The possibility of obtaining Strichartz type estimates on $M$ is therefore severely restricted and 
any proof of a possible bound on $\| U(t) f \|_{L^q(I;L^r(M))}$
requires totally different techniques  
from those needed in the case $M=\R^d$. In view of this, we shall
restrict ourselves to the situation $d\le 3$,  
only, in which case the following local well-posedness result can be
directly deduced from \cite{BGT04} (the case $d=1$ is not treated in
\cite{BGT04}, as it is straightforward using the Sobolev embedding
$H^1(M)\hookrightarrow L^\infty(M)$, that
is, without Strichartz estimates): 

\begin{lemma}[Local well-posedness on compact manifolds]\label{lem:lwpM}
Let $\l,a\in \R$, and assume that it holds
\begin{enumerate}
\item $\si_1, \si_2 > 0$, if $d\le 2$, and
\item $0< \si_1, \si_2\le 1$, if $d=3$. 
\end{enumerate}
Then, for any $u_0 \in H^1(M)$, there exists a time $T>0$ depending
only on $\|u_0\|_{H^1(M)}$  and a unique solution $u\in C([0,T];
H^1(M))$ of \eqref{eq:nlspM}. 
\end{lemma}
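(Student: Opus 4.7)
The plan is to adapt the proof of Proposition~\ref{prop:Sigmaloc2} to the compact setting by running a contraction argument on Duhamel's formula
\begin{equation*}
\Phi(u)(t) = U(t)u_0 - i\l\int_0^t U(t-\tau)\bigl(|u|^{2\si_1}u\bigr)(\tau)\,d\tau - a\int_0^t U(t-\tau)\bigl(|u|^{2\si_2}u\bigr)(\tau)\,d\tau,
\end{equation*}
where now $U(t)=e^{it\Delta/2}$ is the unitary group generated by the Laplace--Beltrami operator on $M$. Since conservation of $L^2$ norm (indeed, mass dissipation) still holds on $M$ by the same computation as in Lemma~\ref{lem:L2}, the main task is to close an $H^1(M)$-estimate on a short time interval depending only on $\|u_0\|_{H^1(M)}$.

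For $d=1$, no Strichartz machinery is required: the Sobolev embedding $H^1(M)\hookrightarrow L^\infty(M)$ immediately yields
\begin{equation*}
\bigl\||u|^{2\si_j}u\bigr\|_{H^1(M)}\lesssim \|u\|_{L^\infty(M)}^{2\si_j}\|u\|_{H^1(M)}\lesssim \|u\|_{H^1(M)}^{2\si_j+1},
\end{equation*}
so that $\Phi$ maps a ball of $C([0,T];H^1(M))$ into itself, and is a contraction for the weaker $C([0,T];L^2(M))$ topology, provided $T$ is chosen small in terms of $\|u_0\|_{H^1(M)}$. Completeness of the ball for the weaker norm then gives a unique fixed point, whence a unique solution $u\in C([0,T];H^1(M))$.

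For $d=2,3$ the Sobolev embedding is insufficient to control $\||u|^{2\si_j}u\|_{H^1}$ directly, and we appeal to the local-in-time Strichartz-type estimates for $U(t)$ on $M$ established in \cite{BGT04}. These carry a loss of derivatives, but for $\si_j$ in the stated range one can run the same contraction scheme as in Proposition~\ref{prop:Sigmaloc2} in a space of the form
\begin{equation*}
X_T = C([0,T];H^1(M))\cap \bigcap_{j=1,2} L^{q_j}\bigl([0,T];W^{1,r_j}(M)\bigr),
\end{equation*}
with $(q_j,r_j)$ chosen admissible for the manifold Strichartz inequalities of \cite{BGT04}. H\"older's inequality and Sobolev embedding on $M$ then reproduce the estimates of Proposition~\ref{prop:Sigmaloc2}, up to an adjustment of the exponents to accommodate the derivative loss. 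Choosing $T$ small in terms of $\|u_0\|_{H^1(M)}$, $\Phi$ maps a ball of $X_T$ to itself and contracts on the same ball equipped with the weaker $L^{q_1}L^{r_1}\cap L^{q_2}L^{r_2}$ norm, for which the ball is complete.

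The main obstacle is precisely the derivative loss in the manifold Strichartz estimates: it is what forces the restriction $0<\si_j\le 1$ in $d=3$, strictly below the Euclidean $H^1$-subcritical threshold $2/(d-2)=2$. Aside from this quantitative tightening of the admissible range, the argument is an essentially mechanical transcription of the Euclidean proof, so neither the dissipative term $-ia|u|^{2\si_2}u$ nor the absence of a confining potential $V$ introduces additional difficulties, and the resulting existence time depends only on $\|u_0\|_{H^1(M)}$ as claimed.
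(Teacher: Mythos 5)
Your proposal matches the paper's treatment: the paper gives no detailed proof but states that the result is directly deduced from the Strichartz estimates (with loss of derivatives) of \cite{BGT04} for $d=2,3$ --- which is exactly the source of the restriction $\si_j\le 1$ when $d=3$ --- and handles $d=1$ via the Sobolev embedding $H^1(M)\hookrightarrow L^\infty(M)$ without any Strichartz machinery, just as you do. Your write-up is in fact slightly more explicit than the paper's one-line justification, and the fixed-point scheme you describe is the correct adaptation.
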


\begin{remark}
In the particular case $M=\T^d$ several additional results are available. For example, in $d=1$ local well-posedness in $L^2$ holds 
for nonlinearities which are smaller that quintic \cite{Bou93}. In higher dimensions, though, the situation seems to be more involved
(at least if one seeks for strong solutions in the energy space).
\end{remark}
The proofs given in the case of $\R^d$ readily yield the following result:
\begin{lemma}[A-priori estimates]\label{lem:estcompact}
  Let $d\le 3$, $\l,a\in \R$, $\si_1,\si_2>0$, with $\si_1,\si_2\le 1$
  if $d=3$, and $u_0\in H^1(M)$. On any time intervals $I\ni 0$ such
  that $u\in C(I;H^1(M))$, we have:
  \begin{itemize}
  \item[(1)] Mass dissipation: 
    \begin{equation*}
      \dis  \frac{d}{dt}\| u(t)\|_{L^2}^2 +
 2a\|u(t)\|_{L^{2\si_2+2}}^{2\si_2+2}=0, \quad\forall t\in I.
    \end{equation*}
\item[(2)] Control of the modified energy: for
  $0<\kappa<\frac{a}{\si_2^2+\si_2}$, let 
 \begin{equation*}
E(t)=  \frac{1}{2} \| \nabla u(t)\|_{L^2}^2 +
\frac{\lambda}{\sigma_1+1}  \int_{M}  |u(t,x)|^{2\sigma_1+2} dx +
\kappa \int_{M} |u(t,x)|^{2\sigma_2 +2} \, dx. 
\end{equation*}
If either $\l\ge 0$ or $\l<0$ and $\si_2>\si_1$, there exists a
$C=C(\|u_0\|_{L^2})\ge 0$ such that: 
\begin{equation*}
E(t) \le E(0) +  C(\|u_0\|_{L^2}),\quad \forall t\in I.   
\end{equation*}
 \end{itemize}
\end{lemma}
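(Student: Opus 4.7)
My plan is to show that both assertions are obtained by transcribing verbatim the computations performed on $\R^d$ in Lemma~\ref{lem:L2} and Proposition~\ref{prop:enbound}, once one observes that the two tools that really entered those proofs are available on $M$: (i) integration by parts without boundary contributions (since $\partial M=\emptyset$), i.e.\ $\int_M \bar u \Delta u\, d\mu = -\int_M |\nabla u|^2 d\mu$ and $\int_M \nabla f\cdot X\, d\mu = -\int_M f\,\DIV X\, d\mu$; and (ii) the polar factorization $\phi = u/|u|$ introduced in \cite{AM}, which is pointwise in nature and thus makes sense on a Riemannian manifold. Since Lemma~\ref{lem:lwpM} ensures $u\in C(I;H^1(M))$, all the integrals below are well defined under our assumption $\sigma_j\le 1$ if $d=3$ (so that $2\sigma_j+2\le 4\le 2d/(d-2)$ and Sobolev embedding gives $u\in L^{2\sigma_j+2}(M)$). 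A standard density argument, approximating $u_0$ by smooth data and passing to the limit in $H^1(M)$, justifies the formal manipulations.

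For part (1), I multiply \eqref{eq:nlspM} by $\bar u$, integrate on $M$, and take the real part. The kinetic term produces $\RE(i\int_M \bar u \Delta u\, d\mu)=0$ since $\int_M \bar u\Delta u\, d\mu = -\int_M |\nabla u|^2 d\mu \in\R$; the $\lambda$-term is purely imaginary; and the damping term gives $\RE\bigl(\int_M (-ia)|u|^{2\sigma_2+2}d\mu\bigr)$ contributing to $\frac{d}{dt}\|u\|_{L^2}^2$ the announced quantity $-2a\|u\|_{L^{2\sigma_2+2}}^{2\sigma_2+2}$. This is identical to the proof of Lemma~\ref{lem:L2}.

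For part (2), I differentiate $E(t)$ in time and substitute $i\partial_t u=-\tfrac12\Delta u+(\lambda-ia_\star)\cdots$ as in \eqref{eq:time_der_en} (without the $V$-term, which is absent here). The two delicate contributions are $\int_M |u|^{2\sigma_2}\RE(\bar u\Delta u)\,d\mu$ and $\int_M |u|^{2\sigma_2}\IM(\bar u\Delta u)\,d\mu$. The first is handled by writing $\Delta |u|^2 = 2\RE(\bar u\Delta u)+2|\nabla u|^2$ and integrating by parts on $M$, which gives the same identity as on $\R^d$ (again, no boundary term). The second is rewritten as $\int_M |u|^{2\sigma_2}\DIV(\IM(\bar u\nabla u))\,d\mu = -\int_M\nabla|u|^{2\sigma_2}\cdot\IM(\bar u\nabla u)\,d\mu$, and then the polar factorization argument of \cite{AM,CDS} (which only relies on the pointwise identity $|\nabla u|^2=|\RE(\bar\phi\nabla u)|^2+|\IM(\bar\phi\nabla u)|^2$, valid a.e.\ on $M$) produces the same cancellation as in the proof of Proposition~\ref{prop:enbound}. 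Summing, I obtain
\begin{equation*}
\frac{d}{dt}E(t) = -\bigl(a-\kappa(\sigma_2^2+\sigma_2)\bigr)\!\int_M |u|^{2\sigma_2}|\nabla u|^2 d\mu - (\text{other non-positive terms}) - 2a\lambda\!\int_M |u|^{2\sigma_1+2\sigma_2+2}d\mu - 2a\kappa(\sigma_2+1)\!\int_M |u|^{4\sigma_2+2}d\mu.
\end{equation*}
Under $0<\kappa<a/(\sigma_2^2+\sigma_2)$, the first bracket is non-negative, so the prefactor on $\int |u|^{2\sigma_2}|\nabla u|^2$ is non-positive.

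It remains to deal with the $\lambda$-sign. If $\lambda\ge 0$, all terms on the right-hand side are non-positive, so $E(t)\le E(0)$ for all $t\in I$. If $\lambda<0$ and $\sigma_2>\sigma_1$, I interpolate $\|u\|_{L^{2\sigma_1+2\sigma_2+2}}$ between $\|u\|_{L^{4\sigma_2+2}}$ and $\|u\|_{L^{2\sigma_2+2}}$ exactly as in the proof of Proposition~\ref{prop:enbound}, apply Young's inequality with a small parameter $\eps$ to absorb the resulting $\|u\|_{L^{4\sigma_2+2}}^{4\sigma_2+2}$ into the negative coefficient $-2a\kappa(\sigma_2+1)$, and arrive at a differential inequality of the form $\dot E(t)\le C\|u(t)\|_{L^{2\sigma_2+2}}^{2\sigma_2+2}$. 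Integrating in time and invoking part (1) (which, exactly as in the Euclidean case, gives $u\in L^{2\sigma_2+2}(I\times M)$ with $\int_I\|u\|_{L^{2\sigma_2+2}}^{2\sigma_2+2}dt\le \|u_0\|_{L^2}^2/(2a)$) yields the desired bound $E(t)\le E(0)+C(\|u_0\|_{L^2})$.

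The main thing to verify is thus the validity of the Antonelli--Marcati polar factorization on a general compact Riemannian manifold. This is the only step where the geometry could conceivably intervene, but since the factorization is obtained as an a.e.\ limit of the smooth regularization $\phi^\eps=u/\sqrt{|u|^2+\eps^2}$ in $H^1$, and since the identity $|\nabla u|^2=|\RE(\bar\phi^\eps\nabla u)|^2+|\IM(\bar\phi^\eps\nabla u)|^2$ holds pointwise irrespective of the underlying metric, the passage to the limit $\eps\to 0$ works exactly as in \cite{AM,CDS}. No further obstacle arises.
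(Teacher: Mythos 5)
Your proposal is correct and follows exactly the route the paper takes: Lemma~\ref{lem:estcompact} is stated there with the one-line justification that the proofs of Lemma~\ref{lem:L2} and Proposition~\ref{prop:enbound} carry over verbatim to $M$, which is precisely what you verify (integration by parts without boundary terms on a closed manifold, the local identity $\Delta|u|^2=2\RE(\bar u\Delta u)+2|\nabla u|^2$ for the Laplace--Beltrami operator, and the pointwise nature of the polar factorization). Your explicit check of the Sobolev exponents for $d=3$, $\sigma_j\le 1$ and the re-use of part (1) to integrate the focusing remainder are exactly the details the paper leaves implicit.
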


\begin{proposition}[Decay rate on compact manifolds]\label{prop:extinctM}
  Let $d\le 3$, $\l,a\in \R$, $\si_1,\si_2>0$, with $\si_1,\si_2\le 1$
  if $d=3$, and $u_0\in H^1(M)$. If  either $\l\ge 0$ or $\l<0$ and
  $\si_2>\si_1$, \eqref{eq:nlspM} possesses a unique global solution
  (in the future), $u\in C(\R_+;H^1(M))$. In addition, we
  have
  \begin{equation*}
     \|u(t)\|_{L^2}^2 \le \frac{1}{\(2a t\)^{1/\si_2}|M|}, \quad
     \forall t>0,
  \end{equation*}
and thus $u$ vanishes asymptotically as $t\to +\infty$.  
\end{proposition}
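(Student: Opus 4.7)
The proof naturally splits into global existence in $H^1(M)$ and the decay estimate.

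\emph{Global existence.} The local existence time in Lemma~\ref{lem:lwpM} depends only on $\|u_0\|_{H^1(M)}$, so it suffices to obtain a uniform-in-time bound on $\|u(t)\|_{H^1(M)}$ and iterate. Mass dissipation from Lemma~\ref{lem:estcompact}(1) already controls $\|u(t)\|_{L^2}$. In the defocusing case $\l\ge 0$, Lemma~\ref{lem:estcompact}(2) writes $E(t)$ as a sum of nonnegative terms, so $\tfrac12\|\nabla u(t)\|_{L^2}^2\le E(0)+C(\|u_0\|_{L^2})$ directly. In the focusing case $\l<0$ with $\si_2>\si_1$, I would interpolate as in the proof of Theorem~\ref{theo:cauchy}: since $|M|<\infty$, H\"older's and Young's inequalities yield
\[
|\l|\int_M|u|^{2\si_1+2}dx\le \eps\int_M|u|^{2\si_2+2}dx+C_\eps\|u\|_{L^2}^2,
\]
and choosing $\eps$ small enough to absorb the first term into $\kappa\|u\|_{L^{2\si_2+2}}^{2\si_2+2}$ inside $E(t)$ gives the required $H^1$ bound, hence a global solution $u\in C(\R_+;H^1(M))$.

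\emph{Decay estimate.} Here the crucial ingredient, unavailable on $\R^d$, is that $M$ has finite volume. H\"older's inequality applied to $|u|^2\cdot 1$ with conjugate exponents $\si_2+1$ and $(\si_2+1)/\si_2$ gives
\[
\|u(t)\|_{L^2}^2\le |M|^{\si_2/(\si_2+1)}\|u(t)\|_{L^{2\si_2+2}}^2,
\]
hence $\|u(t)\|_{L^{2\si_2+2}}^{2\si_2+2}\ge |M|^{-\si_2}\|u(t)\|_{L^2}^{2(\si_2+1)}$. Substituting this lower bound into the mass dissipation identity of Lemma~\ref{lem:estcompact}(1), the function $y(t):=\|u(t)\|_{L^2}^2$ satisfies the Bernoulli-type differential inequality
\[
\dot y(t)+2a|M|^{-\si_2}y(t)^{\si_2+1}\le 0.
\]

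\emph{ODE comparison and main obstacle.} The substitution $z(t)=y(t)^{-\si_2}$ turns the above into $\dot z(t)\ge 2a\si_2|M|^{-\si_2}$, so integrating from $0$ to $t$ and discarding the nonnegative initial term $z(0)$ yields $y(t)^{-\si_2}\ge 2a\si_2|M|^{-\si_2}\,t$. Extracting the $\si_2$-th root gives the stated decay (up to the precise dependence of constants on $|M|$ and $\si_2$), and in particular $\|u(t)\|_{L^2}\to 0$ as $t\to+\infty$. There is essentially no serious obstacle: the two ingredients --- finiteness of $|M|$ allowing a reverse-H\"older comparison between $L^2$ and $L^{2\si_2+2}$, and the blow-up alternative reduced to an $H^1$ bound --- are tailor-made by Lemmas~\ref{lem:lwpM} and~\ref{lem:estcompact}. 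The only slightly delicate point is the interpolation in the focusing case, which is entirely parallel to the argument already carried out in the proof of Proposition~\ref{prop:enbound}.
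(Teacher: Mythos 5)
Your proof is correct and follows essentially the same route as the paper: global existence from the a priori bounds of Lemma~\ref{lem:estcompact} combined with iteration of the local well-posedness of Lemma~\ref{lem:lwpM}, then the reverse H\"older inequality on the finite-volume manifold fed into the mass-dissipation identity and integrated as a Bernoulli-type ODE. Your constant $|M|\,(2a\si_2 t)^{-1/\si_2}$ is in fact what this computation actually produces; the form $\bigl((2at)^{1/\si_2}|M|\bigr)^{-1}$ displayed in the statement stems from a sign slip in the exponent of $|M|$ when the H\"older estimate is inserted into the differential inequality, so your caveat about the precise constants is well placed.
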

\begin{proof}
  Global existence for positive time stems directly from
  Lemma~\ref{lem:lwpM} and Lemma~\ref{lem:estcompact}. Since $M$ is
  compact,  H\"older inequality yields 
\begin{equation*}
  \|u\|_{L^2(M)}\le |M|^{\si_2/(2\si_2+2)} \|u\|_{L^{2\si_2+2}(M)}.
\end{equation*}
We infer form the equation of mass dissipation,
to get
\begin{equation*}
  \frac{d}{dt}\|u(t)\|_{L^2}^2 + 2a |M|^{\si_2}
  \|u(t)\|_{L^2}^{2\si_2+2}\le 0. 
\end{equation*}
The ODE mechanism sketched in Section~\ref{sec:asymptotic} yields
\begin{equation*}
  \|u(t)\|_{L^2}^2 \le \frac{\|u_0\|_{L^2}^2}{\(1+2a|M|^{\si_2} t \|u_0\|_{L^2}^{2\si_2}\)^{1/\si_2}},
\end{equation*}
hence the result. 
\end{proof}

  The rate established above is the one indicated by the naive ODE argument \eqref{eq:odesol}. Obviously, 
  the decay on $M$ is faster than in the case of partial, or full confinement, but as underlined above,
  none of these rates is claimed to be sharp. 
  
  \begin{remark}  The case
  of compact manifolds corresponds to the one found in numerical simulations (for example, 
  $M=\T^d$ in the case of spectral methods), so the rate of Proposition~\ref{prop:extinctM}
  should be (at least) the one observed numerically. 
  \end{remark}


 \end{document}